\DeclareMathAlphabet{\mathfrak}{U}{euf}{m}{n}
\SetMathAlphabet{\mathfrak}{bold}{U}{euf}{b}{n}
\newcommand{\ma}[1]{\begin{align*} #1 \end{align*}}
\newcommand{\maa}[1]{\begin{align} #1 \end{align}}
\theoremstyle{definition}
\newtheorem{defn}[equation]{Definition}
\theoremstyle{plain}
\newtheorem{thm}[equation]{Theorem}
\newtheorem{prp}[equation]{Proposition}
\newtheorem{lem}[equation]{Lemma}
\newtheorem{cor}[equation]{Corollary}
\theoremstyle{remark}
\newtheorem{remk}[equation]{Remark}
\newtheorem{exmp}[equation]{Example}
\newtheorem*{remk*}{Remark}
\newcommand{\real}{\mathbb R}
\newcommand{\comp}{\mathbb C}
\newcommand{\zahl}{\mathbb Z}
\newcommand{\quot}{\mathbb Q}
\newcommand{\nat}{\mathbb N}
\newcommand{\id}{\mathrm{id}}
\DeclareMathOperator{\rank}{rank}
\newcommand{\ssbk}[1]{\left\| #1 \right\|}
\newcommand{\pmx}[1]{\begin{pmatrix} #1 \end{pmatrix}}
\newcommand{\dalpha}{\hat \alpha}
\DeclareMathOperator{\Res}{Res}
\newcommand{\cone}{\mathcal{C}}
\newcommand{\J}{\mathfrak{J}}
\newcommand{\K}{\mathrm{K}}
\newcommand{\KK}{\mathrm{KK}}
\newcommand{\Cst}{\mathrm{C}^*}
\newcommand{\Kop}{\mathbb{K}}
\newcommand{\Bop}{\mathbb{B}}
\newcommand{\Lop}{\mathbb{L}}
\newcommand{\Hilb}{\mathscr{H}}
\newcommand{\Rdim}{\dim_{\mathrm{cRok}}}
\DeclareMathOperator{\Ad}{Ad}
\newcommand{\cO}{\mathcal{O}}
\newcommand{\dG}{\hat{G}}
\newcommand{\Irr}{\operatorname{\rm Irr}}
\newcommand{\dD}{\hat \Delta}
\title[Compact Lie group actions with continuous Rokhlin property]{Compact Lie group actions with continuous Rokhlin property}
\date{\today}
\author[Y. Arano]{Yuki Arano}
\address{Graduate School of Mathematical Science, The University of Tokyo, 3-8-1 Komaba, Meguro-ku, Tokyo 153-8914, Japan}
\email{arano@ms.u-tokyo.ac.jp}
\author[Y. Kubota]{Yosuke Kubota}
\address{Graduate School of Mathematical Science, The University of Tokyo, 3-8-1 Komaba, Meguro-ku, Tokyo 153-8914, Japan}
\email{ykubota@ms.u-tokyo.ac.jp}
\subjclass[2010]{Primary 46L55; Secondary 19K35, 46L35, 46L65.}
\keywords{Continuous Rokhlin property, equivariant $\KK$-theory, Kirchberg algebras.}
\begin{document}
\maketitle
\begin{abstract}
In this paper, we study continuous Rokhlin property of $\mathrm{C}^*$-dynamical systems using techniques of equivariant $\mathrm{KK}$-theory and quantum group theory. In particular, we determine the $\mathrm{KK}$-equivalence class and give a classification of Kirchberg $G$-algebras when the $G$ is a compact Lie group with Hodgkin condition. 
\end{abstract}

\section{Introduction}
After the initial work by Connes~\cite{MR0388117}, the study of group actions on $\Cst$-algebras and von Neumann algebras, particularly their classification, is a fundamental subject in the theory of operator algebras, as well as the classification of operator algebras themselves.
In this paper, we focus on actions of compact Lie groups on $\Cst$-algebras. 

In the study of von Neumann algebras, the classification of compact group actions on factors are studied in Kawahigashi-Takesaki~\cite{MR1156672}, Masuda-Tomatsu~\cite{MR2578461} and so on. 
In the context of $\Cst$-algebras, Izumi~\citelist{\cite{MR2053753}\cite{MR2047851}}  introduced Rokhlin property of finite group actions on $\Cst$-algebras extracting an essential property which is used to classify group actions in the von Neumann algebra theory and classified such actions.
He also shows that many of fundamental $\Cst$-dynamical systems such as infinite tensor products of left regular actions of a finite group $G$ on $\mathbb{M} _{|G|}$ and the quasi-free action of a finite group $G$ on $\cO_{|G|}$ by a regular representation has this property.
After his work, the study of this kind of actions has been attracting attention among $\Cst$-algebraists~\citelist{\cite{MR2808327}\cite{MR2875821}\cite{MR3316642}\cite{MR3398712}}.
Among those, recently, Gardella~\cite{mathOA14081946} initiated the study of Rokhlin actions for general compact groups.

Roughly speaking, Rokhlin property is an analogue in an approximate sense of freeness of product type (that is, $G$-actions on $G \times X$) in topological dynamics.
For a $G$-$\Cst$-algebra $A$ with Rokhlin property, there is a fundamental technique replacing a projection or a unitary with another $G$-invariant one, which is called an \emph{averaging technique} by Gardella~\cite{mathOA14075485}. More precisely, there is a sequence of completely positive maps $A \to A^\alpha$ which is approximately a $\ast$-homomorphism preserving $A^\alpha$. 
This gives a strong restriction on the structure of $\Cst$-dynamical systems, its $\K$-groups, crossed products and so on. 
For example, we can prove certain ``approximate cohomology vanishing" type theorems, which play an essential role in the study of actions on von Neumann algebras.

Our main tool is Kasparov's $\KK$-theory~\cite{MR582160}. 
Although it is a kind of (co)homology theory and hence does not distinguish two homotopic $\Cst$-algebras, $\KK$-theory is also a powerful tool for classifications of $\Cst$-algebras up to isomorphism. 
For example, the Kirchberg-Phillips classification \citelist{\cite{Kirchberg}\cite{MR1745197}} asserts that two Kirchberg algebras are isomorphic if and only if there is a $\KK$-equivalence preserving the unit classes in $K_0$-groups. 
Here, it is essential in the proof of \cite{MR1745197} that the $\KK$-group has a presentation as the set of homotopy classes of asymptotic morphisms (cf.\ $E$-theory \cite{MR1065438}).

In \cite{mathOA150806815}, the authors study the relative homological algebra of compact group equivariant $\KK$-theory in connection with the Atiyah-Segal completion theorem. 
Here, we introduce a categorical counterpart of freeness of group actions on $\Cst$-algebras, $\J_G^n$-injectivity. 
It directly follows from Thomsen's picture of equivariant $\KK$-theory \cite{MR1680467} that $G$-$\Cst$-algebras with continuous Rokhlin property \cite{mathOA14071277}, a variation of Rokhlin property, is $\J_G$-injective. 
More generally, if a $\Cst$-dynamical system has continuous Rokhlin dimension with commuting tower \citelist{\cite{mathOA12091618}\cite{mathOA14071277}} less than $d$, then it is $\J_G^{d+1}$-injective (Theorem \ref{thm:Rokinj}). 
This theorem is also available for the study of Rokhlin property because the Rokhlin dimension with commuting tower is finite if and only if so is the continuous Rokhlin dimension with commuting tower (Proposition \ref{prp:RokcRok}).

Although continuous Rokhlin property is strictly stronger than Rokhlin property, many of known examples of Rokhlin actions have continuous Rokhlin property. Actually, a necessary and sufficient condition for a Rokhlin action on a Kirchberg algebra to have continuous Rokhlin property is given in terms of equivariant $\KK$-theory (Proposition \ref{prp:injcRok}). 

Our first main theorem (Theorem \ref{thm:KKHod}) asserts that any $G$-$\Cst$-algebra with continuous Rokhlin property is $\KK^G$-equivalent to $A^\alpha \otimes C(G)$ when $G$ is a compact Lie group with Hodgkin condition (connected and $\pi _1(G)$ is torsion-free). 
To see this, we use the strong Baum-Connes isomorphism for an arbitrary coefficient of the dual quantum group $\dG$ of a Hodgkin Lie group $G$ \cite{MR2339371}, which is rephrased in terms of the crossed product functor since $\dG$ is torsion-free in the sense of \cite{MR2563811}.

Next, we apply it for classification of Kirchberg $G$-algebras with continuous Rokhlin property. 
In \cite{MR2053753}, Izumi gives a necessary and sufficient condition for two actions with Rokhlin property to be conjugate by using an intertwining argument. 
By the same argument, we prove that two Kirchberg $G$-algebras are $G$-equivariantly isomorphic if and only if they are $\KK^G$-equivalent. 
Together with Theorem \ref{thm:KKHod}, we obtain a complete classification of Kirchberg $G$-algebras (Theorem \ref{thm:Kir}) in terms of equivariant $\KK$-theory, which is our second main theorem. 
An essential part in Theorem \ref{thm:Kir} is construction of the model action $\cO(G)$ of Kirchberg $G$-algebra with continuous Rokhlin property which is $\KK^G$-equivalent to $C(G)$. We construct it as a crossed product of the dual discrete quantum group $\hat{G}$ by an asymptotically representable action on $\cO_\infty$. We use Kishimoto's argument~\cite{MR634163} in order to show that the crossed product is again a Kirchberg algebra.

\section{Continuous Rokhlin property for compact group actions}\label{section:2}
Throughout this paper, we assume that all topological groups are second countable Hausdorff and all $\Cst$-algebras are unital and separable unless otherwise noted.
\subsection{Definition and Examples}
Let $G$ be a compact group and let $(A,\alpha)$ be a unital $G$-$\Cst$-algebra. Set
\ma{
\mathfrak{T}_\alpha A&:=\{ a=(a_t) \in C_b([0,\infty),A) \mid \text{$(a_t)$ is uniformly $G$-continuous} \}, \\
\mathfrak{A}_\alpha A &:=\mathfrak{T}_\alpha A /C_0([0,\infty ) ,A), \ \mathfrak{C}_\alpha A:=\mathfrak{A}_\alpha A \cap A'.
}
They are equipped with the canonical $G$-$\Cst$-algebra structure. We say the (nonseparable) $\Cst$-algebra $\mathfrak{C}_\alpha A$ is the \emph{central path algebra} of $(A,\alpha)$.

\begin{defn}
Let $G$ be a compact group and let $X$ be a compact $G$-space. 
\begin{enumerate}
\item We say that $(A,\alpha)$ has \emph{continuous Rokhlin property} if there is an $G$-equivariant unital $\ast$-homomorphism $\varphi : C(G) \to \mathfrak{C}_\alpha A$.
\item We say that $(A,\alpha)$ has \emph{continuous $X$-Rokhlin property} if there is an $G$-equivariant unital $\ast$-homomorphism $\varphi :C(X) \to \mathfrak{C}_\alpha A$.
\item We say that $(A,\alpha)$ has \emph{continuous Rokhlin dimension} less than $d$ \emph{with commuting tower} and write as $\Rdim ^c (A,\alpha)\leq d$ if there are completely positive contractive order zero maps $\varphi ^0, \dots , \varphi ^d: C(G) \to \mathfrak{C}_\alpha A$ with commuting ranges such that $\varphi ^0(1)+\dots +\varphi ^d(1)=1$. 
\end{enumerate}
Here we say that this $\varphi$ is a continuous Rokhlin map for $(A,\alpha)$. 
\end{defn}
These are variations of Rokhlin property (Definition 3.1 of \cite{MR2053753}) and the Rokhlin dimension (Definition 2.3 of \cite{mathOA12091618}), which are defined by using $\zahl _{>0}$ instead of $[0,\infty)$.

A $G$-$\Cst$-algebra $A$ has continuous Rokhlin dimension less than $d-1$ with commuting tower if and only if $A$ has continuous $E_dG$-Rokhlin property where $E_dG:=G \ast \cdots \ast G$ is the $d$-th step of the Milnor construction (cf.\ Lemma 1.7 of \cite{MR3398712} and Lemma 4.4 of \cite{mathOA14071277}). To see this, remind that a completely positive contractive map of order zero from $A$ to $B$ is of the form $\varphi (ta)$ where $\varphi$ is a unital $\ast$-homomorphism from $\cone A:= C_0((0,1], A)^+$ to $B$  (Corollary 4.1 of \cite{MR2545617}). We remark that $\varphi (ta)$ is $G$-equivariant if and only if so does $\varphi$ and the join $X \ast Y$ of two compact spaces satisfies $\cone (X \ast Y) \cong \cone X \times \cone Y$ where $\cone X:= X \times [0,1]/{X \times \{ 0 \}}$. 

\begin{prp}\label{prp:RokcRok}
For a $G$-$\Cst$-algebra, we have
\[\dim _{\mathrm{Rok}}^c (A,\alpha) \leq \Rdim ^c (A, \alpha ) \leq 2 \dim _{\mathrm{Rok}}^c (A,\alpha) +1. \]
In particular, we have $\Rdim ^c(A,\alpha ) \leq 1$ if $A$ has Rokhlin property. 
\end{prp}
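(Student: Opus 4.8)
The left-hand inequality $\dim_{\mathrm{Rok}}^c(A,\alpha)\le\Rdim^c(A,\alpha)$ is immediate from the definitions, since a commuting tower is in particular a tower: any c.p.c.\ order zero maps $\varphi^0,\dots,\varphi^d\colon C(G)\to\mathfrak{C}_\alpha A$ with commuting ranges and $\sum_i\varphi^i(1)=1$ witnessing $\Rdim^c(A,\alpha)\le d$ also witness $\dim_{\mathrm{Rok}}^c(A,\alpha)\le d$ once the commutation requirement is dropped. The content is therefore the right-hand inequality, for which I may assume $d:=\dim_{\mathrm{Rok}}^c(A,\alpha)<\infty$ and fix c.p.c.\ order zero maps $\varphi^0,\dots,\varphi^d\colon C(G)\to\mathfrak{C}_\alpha A$ with $\sum_i\varphi^i(1)=1$ whose ranges need not commute.

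The plan for the right-hand inequality is the \emph{doubling trick}, carried out in the continuous time parameter of the central path algebra. Using the cone description recalled before Proposition~\ref{prp:RokcRok}, I first rewrite each $\varphi^i$ as the restriction of an equivariant unital $\ast$-homomorphism $\bar\varphi^i\colon\cone C(G)\to\mathfrak{C}_\alpha A$, and I choose representatives of the $\bar\varphi^i$ inside the path algebra $\mathfrak{T}_\alpha A$, that is, paths $t\mapsto\bar\varphi^i_t$ of c.p.c.\ order zero maps $C(G)\to A$ that become approximately central and approximately equivariant as $t\to\infty$. The heart of the argument is to \emph{stagger} these representatives along $[0,\infty)$: I partition $[0,\infty)$ into consecutive blocks and two-colour them so that no two blocks of the same colour are adjacent; inside each block the $d+1$ maps are laid down one after another on pairwise disjoint subintervals, separated by short transition regions, and the two colours are interleaved so that, at every time $t$, the sum of the units of the maps that are switched on equals $1$ modulo $C_0([0,\infty),A)$.

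Collecting, for each colour $\epsilon\in\{0,1\}$ and each index $i$, the pieces of $\bar\varphi^i$ supported on the blocks of colour $\epsilon$ yields $2(d+1)$ c.p.c.\ order zero maps $\psi^{i,\epsilon}\colon C(G)\to\mathfrak{C}_\alpha A$. Within a fixed colour the supports in the $t$-variable are disjoint, so the corresponding maps have orthogonal, hence commuting, ranges; the transition regions are where the two colours meet. The \textbf{main obstacle} is precisely to force the ranges to commute across these transitions while simultaneously keeping each $\psi^{i,\epsilon}$ of order zero and keeping $\sum_{i,\epsilon}\psi^{i,\epsilon}(1)=1$: this is where the passage to the quotient $\mathfrak{A}_\alpha A=\mathfrak{T}_\alpha A/C_0([0,\infty),A)$ and the approximate centrality of the representatives enter, so that the commutators and the defect in the partition of unity, being supported on transition regions whose relative length can be driven to $0$ as $t\to\infty$, vanish in $\mathfrak{C}_\alpha A$. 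The outcome is a commuting tower of length $2(d+1)$, that is $\Rdim^c(A,\alpha)\le 2d+1$.

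Finally, the last assertion follows by specialising to $d=0$. If $A$ has (continuous) Rokhlin property, the Rokhlin map $\varphi\colon C(G)\to\mathfrak{C}_\alpha A$ is a single unital order zero map, hence $\dim_{\mathrm{Rok}}^c(A,\alpha)=0$, and the right-hand inequality gives $\Rdim^c(A,\alpha)\le 1$; equivalently, one applies the staggering construction to the single datum $\varphi$, which produces a commuting tower of length at most $2$.
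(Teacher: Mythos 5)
You have misread the statement, and the misreading drives the rest of the proposal. In this paper the two quantities do not differ by the commuting\mbox{-}tower requirement but by the time parameter: $\dim_{\mathrm{Rok}}^c(A,\alpha)$ is the \emph{sequential} Rokhlin dimension with commuting tower (defined with $\zahl_{>0}$ in place of $[0,\infty)$, as remarked right after the definition, following Definition 2.3 of \cite{mathOA12091618}), whereas $\Rdim^c(A,\alpha)$ is the \emph{continuous-time} one; the superscript $c$ stands for ``commuting tower'' in both. This is how the introduction states the result (``the Rokhlin dimension with commuting tower is finite if and only if so is the continuous Rokhlin dimension with commuting tower''), it is why the paper's proof starts from order zero maps $\varphi^i\colon C(G)\to A_{\alpha,\infty}=\ell^\infty_\alpha(\nat,A)/c_0(\nat,A)\cap A'$ rather than into $\mathfrak{C}_\alpha A$, and it is the only reading under which the ``in particular'' clause has content: the Rokhlin property (Definition 3.1 of \cite{MR2053753}) is a sequential notion, it gives $\dim_{\mathrm{Rok}}^c(A,\alpha)=0$, and the right-hand inequality then produces the continuous-time conclusion $\Rdim^c(A,\alpha)\le 1$. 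Your treatment of that clause has to insert the word ``(continuous)'' before ``Rokhlin property'', i.e.\ it assumes exactly what is being asserted. Note also that under your reading the right-hand inequality would say that a tower with no commutation hypothesis can always be upgraded to a commuting one of roughly twice the length; since finiteness of the commuting-tower dimension carries $\K$-theoretic obstructions (Theorem \ref{thm:Rokinj}) which finiteness of the non-commuting dimension does not, the two notions are known to differ in general, so no argument could establish that version. (Under the correct reading the left-hand inequality is still easy, but it is proved by restricting paths to integer times, not by dropping commutativity.)

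Independently of the reading, the staggering you describe fails at exactly the point you flag as the main obstacle, and the failure is not confined to transition regions. Already for $d=1$: if inside a block the maps $\varphi^0,\varphi^1$ occupy disjoint time subintervals, then at a time interior to the subinterval carrying $\varphi^0$ the switched-on units form a convex combination of $\varphi^0_t(1)$ and one unit contributed by the other colour; since the individual units are not $1$ (only $\sum_i\varphi^i(1)=1$ is assumed), the sum of switched-on units differs from $1$ by an amount of fixed size on the interior of every such subinterval. This defect is not an element of $C_0([0,\infty),A)$ and does not disappear in $\mathfrak{A}_\alpha A$, no matter how short the transitions are made. A partition of unity forces all $d+1$ maps of a given stage to be switched on \emph{simultaneously}, and time-staggering cannot make simultaneously supported maps commute: asymptotic centrality makes $\varphi^i_n(f)$ commute with \emph{fixed} elements of $A$, e.g.\ with representatives chosen at earlier stages $m<n$, but gives nothing for two families moving at the same stage. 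That is why the commuting hypothesis on the input tower is indispensable, and it is exactly how the paper argues: start from the sequential commuting tower $(\varphi^i_n)$, pass to a subsequence so that distinct stages asymptotically commute ($\|[\varphi^i_n(f),\varphi^i_m(f)]\|\le 2^{-n-m}$), then cross-fade consecutive stages by tent functions $\tilde\varphi^{i,j}_{2n+j+t}(f)=(1-|t|)\varphi^i_n(f)$ for $t\in[-1,1]$ and $j=0,1$. At every time each colour carries a \emph{complete} family of $d+1$ maps; commutation within a colour is inherited from the commuting-tower hypothesis, commutation across colours comes from the stagger, and the two colours' tents sum to the correct partition of unity. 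Your doubling instinct and the count $2d+2$ are right, but the input must be the commuting sequential tower, and the disjoint-subinterval layout must be replaced by simultaneous supports.
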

\begin{proof}
We write $A_{\alpha, \infty}:=\ell ^\infty_\alpha (\nat, A) /c_0(\nat , A) \cap A'$ as in \cite{mathOA14081946} and let $\varphi^i: C(G) \to A_{\alpha ,\infty}$ be completely positive maps of order zero such that $\varphi ^0(1)+\cdots +\varphi ^d(1)=1$. Let $F_n$ be an increasing sequence of finite subsets of $C(G)$ such that $\overline{\bigcup F_n}=C(G)$ and choose $\varphi _n$ inductively such that $\ssbk{[\varphi ^i_n(f), \varphi ^i_m(f)]} \leq 2^{-n-m}$ for any $n,m \geq N$, $f \in F_N$ and $i=0,\dots, d$.

Then, we obtain $(2d+2)$ $G$-equivariant completely positive maps from $C(G)$ to $\mathfrak{C}_\alpha A$ of order zero given by
\[
\tilde{\varphi} _{2n+j+t}^{i,j}(f)=(1-|t|)\varphi ^i_n(f) \ \  \text{ for $t \in [-1,1]$}
\]
for $i=0,\dots ,d$ and $j=0,1$. By definition, the images in $\mathfrak{C}_\alpha A$ commute and $\sum \tilde{\varphi}^{i,j}_t(1)=1$.
\end{proof}

\begin{remk}
Continuous Rokhlin property is actually strictly stronger than Rokhlin property. See Subsection \ref{section:5.3} for more detail.
\end{remk}

\begin{exmp}
The following examples are pointed out by Eusebio Gardella. Let $G$ be a finite group. The UHF algebra $\mathbb{M}_{|G|^\infty}$ with the $G$-action $\alpha _g :=\bigotimes ^\infty \Ad \lambda _g$ has continuous Rokhlin property. Actually, we obtain an asymptotically central path of mutually orthogonal projections $\{ p_t^g \}_{g \in G}$ satisfying $\alpha _h(p_t^g)=p^{hg}_t$ given by
$$p_t^g:=1 \otimes \cdots \otimes 1 \otimes u_{t-n} (p_g \otimes 1) u_{t-n}^* \otimes 1 \otimes  \cdots $$
for $t \in [n,n+1]$. Here $p_g \in \mathbb{M}_{|G|}$ is the projection corresponding to $g \in G$ and $u_t$ is a homotopy of $G$-invariant unitaries in $\mathbb{M}_{|G|}^{\otimes 2}$ such that $u_0=1$ and $u_1$ is the flip on $\comp ^{|G|} \otimes \comp ^{|G|}$.

Since $\cO_n \otimes \mathbb{M}_{n^\infty} \cong \cO_n$, we obtain an example of continuous Rokhlin actions on the Cuntz algebras $\mathcal{O}_{|G|}$. On the other hand, since any automorphism on $\cO_{|G|}$ is approximately inner (Theorem 3.6 of \cite{MR1225963}), Theorem 3.5 of \cite{MR2053753} implies that every Rokhlin action on $\cO_{|G|}$ is conjugate to the above action and hence has continuous Rokhlin property. In particular, the quasi-free action with respect to the left regular representation has continuous Rokhlin property (Proposition 5.6 of \cite{MR2053753}). Similarly, by the above example and Kirchberg's absorption theorem (Theorem 3.2 of \cite{MR1780426}), the unique Rokhlin action of $G$ on $\mathcal{O}_2$ (Theorem 4.2 of \cite{MR2053753}) has continuous Rokhlin property.
\end{exmp}

\begin{exmp}
Pick $\theta, \omega \in \real \setminus \quot $ such that $\theta-\omega \in \real \setminus \quot$ and let
\[
A:=\zahl {\, }_{\alpha _{\theta}} \!\! \ltimes(C(\mathbb{T}) \rtimes _{\alpha _\omega}\zahl)
\] 
be a noncommutative torus where we write $\alpha _z$ for the rotation automorphism on $C(\mathbb{T})$ (we use the same letter $\alpha _z$ for the automorphism $\alpha _t \rtimes \zahl$ on $C(\mathbb{T}) \rtimes \zahl$). Then, $A$ is simple by Theorem 1.9 of \cite{mathOA0609783} and the dual action $\hat{\alpha}_{\theta}$ of $\mathbb{T}$ has Rokhlin property since $\alpha _{\theta}$ is approximately representable (cf.\ Lemma \ref{lem:asymprep}). Moreover, Proposition \ref{prp:injcRok} implies that $(A \otimes \cO_\infty, \alpha \otimes \id )$ has continuous Rokhlin property (note that $A$ is $\KK^{\mathbb{T}}$-equivalent to $C(\mathbb{T}^3)$ and hence $\J_{\mathbb{T}}$-injective). 
\end{exmp}

\subsection{Averaging technique via equivariant $\KK$-theory}\label{section:3}
A fundamental technique for $\Cst$-dynamical systems with (continuous) Rokhlin property is an averaging process. 
Let $A$ be a separable $G$-$\Cst$-algebra with continuous $X$-Rokhlin property. 
We use the same letter $\varphi$ for its $G$-equivariant completely positive contractive lift, which is a $G$-equivariant asymptotic morphism from $C(X)$ to $A$. 
Let us choose an increasing sequence $F_n$ of finite subsets of $A$ such that $\overline{\bigcup F_n}=A$ and families $\{ f_{n,i} \}_{i \in I_n}$ of positive continuous functions on $X$ as Lemma 4.2 of \cite{mathOA14075485}.
Then, we obtain a $G$-equivariant completely positive asymptotic morphism $\xymatrix@C=1em{\psi :A \otimes C(X) \ar@{.>}[r] &A}$ defined by 
\maa{
\psi _t(\xi ):=&(t-n) \sum _{i}\varphi _{\chi(t)}(f_{n,i})^{1/2}\xi(x_{n,i}) \varphi _{\chi (t)} (f_{n,i})^{1/2} \label{form:ab}\\
&+(n+1-t) \sum _i \varphi _{\chi (t)}(f_{n+1,i})^{1/2}\xi (x_{n+1,i}) \varphi _{\chi (t)} (f_{n+1,i})^{1/2} \notag }
for $t \in [n,n+1]$ where $\chi : \real _{\geq 0} \to \real _{\geq 0}$ is a homeomorphism such that $\ssbk{[\varphi_t(f_n) ,a] } <2^{-n} |I_n|^{-1}$ for any $t \geq \chi (n)$ and $a \in F_n$. 
Note that $\psi =\id _A \times \varphi _\chi$ as a $\ast$-homomorphism from $A \otimes C(X)$ to $\mathfrak{A}_\alpha A$.

This averaging map is compatible with the picture of $\KK$-theory given in \citelist{\cite{MR1673935}\cite{MR1680467}} using completely positive asymptotic morphisms. Actually we have the isomorphism
\maa{\KK ^G(A,B) \cong [\![ SA \otimes \Kop _G, SB \otimes \Kop _G ]\!] _{\mathrm{cp}}^G \label{form:KKcp}}
where $[\![ A, B ]\!] _{\mathrm{cp}}^G$ is the set of homotopy classes of completely positive $G$-equivariant asymptotic morphisms from $A$ to $B$. Moreover the Kasparov product is given by the composition of asymptotic morphisms. Hence $\psi$ gives an element $\KK ^G(A \otimes C(X),A)$ such that $[\psi] \circ \iota =\id _A$. As an immediate consequence, we obtain the following.

\begin{thm}\label{thm:Rokinj}
Let $A$ be a separable unital $G$-$\Cst$-algebra. If $A$ has continuous Rokhlin property, then $A$ is $\J_G$-injective. Moreover, if $A$ has continuous Rokhlin dimension with commuting tower less than $d-1$, then $A$ is $\J ^{d}_G$-injective.
\end{thm}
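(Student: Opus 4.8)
The strategy is to realise $A$ as a retract, in $\KK^G$, of a ``free'' algebra built out of copies of $G$, and then to use that $\J^d_G$-injectivity is inherited by retracts. For the first statement we are in the case of continuous $G$-Rokhlin property, so the averaging map \eqref{form:ab} with $X=G$ supplies a class $[\psi]\in\KK^G(A\otimes C(G),A)$ with $[\psi]\circ\iota=\id_A$, where $\iota\colon A\to A\otimes C(G)$ is $a\mapsto a\otimes 1$. Hence $A$ is a retract in $\KK^G$ of $A\otimes C(G)$ with the diagonal action. The untwisting isomorphism $f\mapsto(g\mapsto\alpha_g^{-1}(f(g)))$ identifies $A\otimes C(G)$ with $\Ind^G_e(\Res A)$, an algebra induced from the trivial subgroup, which is $\J_G$-injective by construction of the ideal $\J_G$; therefore so is its retract $A$.

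For the second statement I would run the identical argument with $G$ replaced by the join. By the equivalence recorded just before Proposition \ref{prp:RokcRok}, the hypothesis that $A$ has continuous Rokhlin dimension with commuting tower less than $d-1$ is precisely continuous $E_dG$-Rokhlin property, with $E_dG=G\ast\cdots\ast G$. Averaging with $X=E_dG$ then presents $A$ as a retract in $\KK^G$ of $A\otimes C(E_dG)$, so it suffices to prove that $A\otimes C(E_dG)$ is $\J^d_G$-injective.

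To do so, filter $E_dG$ by the number of nonzero join coordinates: let $F_k$ be the closed $G$-invariant set of points supported in at most $k$ coordinates. This gives a chain of $G$-invariant ideals $C(E_dG)=I_0\supset I_1\supset\cdots\supset I_d=0$ with $I_k=C_0(E_dG\setminus F_k)$ and $d$ subquotients $I_{k-1}/I_k\cong C_0(F_k\setminus F_{k-1})$. Each stratum $F_k\setminus F_{k-1}$ is the free $G$-space $\bigsqcup_{|S|=k}\mathring\Delta^{k-1}\times G^k$, and the diagonal action on $G^k$ is trivialised by $(g_1,\dots,g_k)\mapsto(g_1,g_1^{-1}g_2,\dots,g_1^{-1}g_k)$; hence each subquotient, and therefore each $A\otimes(I_{k-1}/I_k)$, is induced from the trivial subgroup and so is $\J_G$-injective. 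Thus $A\otimes C(E_dG)$ is the top of a tower of $d$ $G$-equivariant cofibre sequences $A\otimes I_k\to A\otimes I_{k-1}\to A\otimes(I_{k-1}/I_k)$ with $\J_G$-injective cofibres.

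The remaining point, and the crux, is the purely homological-algebraic passage from such a tower to $\J^d_G$-injectivity. Here one uses only that $\J_G$ is an ideal and that the cofibres $Q_k:=I_{k-1}/I_k$ satisfy $\J_G(-,Q_k)=0$: given $f=f_d\circ\cdots\circ f_1\in\J^d_G(-,A\otimes C(E_dG))$, the cofibre sequence with cofibre $A\otimes Q_1$ forces the $\J_G$-map $f_d$ (whose image in the $\J_G$-injective $A\otimes Q_1$ vanishes) to factor through $A\otimes I_1$, and absorbing this lift into $f_{d-1}$ via the ideal property lowers the power by one; induction down the tower, with base case the $\J_G$-injective $A\otimes Q_d$, yields $f=0$. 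The first statement is exactly the case $d=1$. I expect the main obstacle to be precisely this matching of the geometry of the join with the powers of $\J_G$ --- namely verifying that the support filtration of $E_dG$ has $\J_G$-injective strata (the per-stratum trivialisation and untwisting) and that a $d$-step tower with $\J_G$-injective cofibres is $\J^d_G$-injective in the sense of \cite{mathOA150806815} --- whereas the averaging input and the retraction are supplied directly by Section \ref{section:3}.
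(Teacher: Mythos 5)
Your proposal is correct, and its first half is already the paper's entire proof: the averaging morphism of \eqref{form:ab}, applied with $X=G$ (resp.\ $X=E_dG$, via the equivalence, recorded before Proposition \ref{prp:RokcRok}, between continuous Rokhlin dimension at most $d-1$ with commuting tower and continuous $E_dG$-Rokhlin property), yields $[\psi]\in\KK^G(A\otimes C(X),A)$ with $[\psi]\circ[\iota]=\id_A$. The paper stops at this point because it \emph{defines} $\J_G^d$-injectivity as the existence of a left inverse for $[\iota]\colon A\to A\otimes C(E_dG)$, citing \cite{mathOA150806815} for the equivalence with the ideal-theoretic formulation $\J_G^d(-,A)=0$. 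Everything you do after the retraction step is therefore a self-contained re-proof of (one direction of) that equivalence: working with the ideal-theoretic definition, you show directly that $A\otimes C(E_dG)$ is $\J_G^d$-injective using the support filtration of the join, the per-stratum trivialisation and untwisting to algebras induced from the trivial subgroup, and the tower induction, and then pass to the retract $A$. This argument is sound, modulo standard points you leave implicit: the filtration extensions must be equivariantly semisplit to give exact triangles in $\KK^G$ (true here, since the quotients are tensor products of $A$ with commutative algebras, so a completely positive splitting exists and can be averaged over the compact group $G$); induced algebras are $\J_G$-injective by the induction--restriction adjunction; and a general element of $\J_G^d$ is a finite sum of $d$-fold composites, which your factorisation-and-absorption induction handles by linearity. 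In short, the paper's proof is one line modulo the cited characterization of $\J_G^d$-injectivity, while yours is longer but reconstructs, rather than quotes, the geometric content (the join versus powers of the ideal) that \cite{mathOA150806815} packages into that characterization.
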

Here, we say that a separable $G$-$\Cst$-algebra $A$ is $\J_G^d$-injective if the $\KK^G$-morphism induced from the canonical inclusion $A \to A \otimes C(E_dG)$ has a left inverse (see \cite{mathOA150806815} for more details). 

\begin{cor}\label{cor:equiv}
Let $A$ and $B$ be $G$-$\Cst$-algebras with finite (continuous) Rokhlin dimension with commuting tower. For $\phi \in \KK^G(A, B)$, $\phi$ is a $\KK^G$-equivalence if and only if $\Res _G \phi$ is a $\KK$-equivalence. Moreover, if $A$ and $B$ are in the UCT class, $\phi$ is a $\KK^G$-equivalence if and only if $(\Res _G\phi) _*:\K_*(A) \to \K_*(B)$ is an isomorphism.
\end{cor}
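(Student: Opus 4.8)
The plan is to deduce everything from $\J_G$-injectivity, so that the whole statement reduces to the assertion that $\Res_G$ reflects isomorphisms between $\J_G^d$-injective objects. Since $\Res_G$ is a functor, a $\KK^G$-equivalence always restricts to a $\KK$-equivalence, and only the converse needs argument. By Proposition \ref{prp:RokcRok} both $A$ and $B$ have finite continuous Rokhlin dimension with commuting tower, whence by Theorem \ref{thm:Rokinj} they are $\J_G^{d}$-injective for suitable $d$; pulling back a splitting along the surjection $C(E_{d'}G)\to C(E_dG)$ induced by the inclusion $E_dG\hookrightarrow E_{d'}G$ shows that $\J_G^d$-injectivity implies $\J_G^{d'}$-injectivity for $d'\geq d$, so I may fix a common $d$ for which both $A$ and $B$ are $\J_G^d$-injective. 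Write $\iota_A\colon A\to A\otimes C(E_dG)$ and $\iota_B$ for the canonical inclusions and fix left inverses $r_A,r_B$ in $\KK^G$.

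The key lemma I would establish is: if $\Res_G\theta$ is a $\KK$-equivalence for $\theta\in\KK^G(A',B')$, then $\theta\otimes\id_{C(E_dG)}$ is a $\KK^G$-equivalence. Granting this, I run a two-sided splitting argument. Naturality of $\iota$ gives $\iota_B\circ\phi=(\phi\otimes\id_{C(E_dG)})\circ\iota_A$ in $\KK^G$, so setting $\psi:=r_A\circ(\phi\otimes\id_{C(E_dG)})^{-1}\circ\iota_B$ yields $\psi\circ\phi=r_A\circ\iota_A=\id_A$. Restricting, $\Res_G\psi=(\Res_G\phi)^{-1}$ is again a $\KK$-equivalence, so the same construction applied to $\psi$ produces $\chi\in\KK^G(A,B)$ with $\chi\circ\psi=\id_B$. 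Then $\chi=\chi\circ(\psi\circ\phi)=(\chi\circ\psi)\circ\phi=\phi$, whence $\phi\circ\psi=\id_B$ and $\phi$ is a $\KK^G$-equivalence.

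It remains to prove the key lemma, which I expect to be the main obstacle. For $d=1$ it is immediate: the Fell untwisting isomorphism identifies $A'\otimes C(G)$ (diagonal action) with $\mathrm{Ind}_e^G(\Res_G A')$, under which $\theta\otimes\id_{C(G)}$ becomes $\mathrm{Ind}_e^G(\Res_G\theta)$, and an induction functor carries the $\KK$-equivalence $\Res_G\theta$ to a $\KK^G$-equivalence. For general $d$ I would induct on $d$ using the join decomposition $E_dG=G\ast E_{d-1}G$: the equivariant Mayer--Vietoris sequence attached to the covering of $E_dG$ by equivariant neighborhoods of its two ends, which deformation retract onto $G$ and $E_{d-1}G$ with overlap $G\times E_{d-1}G$, expresses $C(E_dG)$ through $C(G)$, $C(E_{d-1}G)$ and $C(G)\otimes C(E_{d-1}G)$. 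Comparing, via $\theta$, the two sequences obtained by tensoring with $A'$ and $B'$, the map $\theta\otimes\id$ is a $\KK^G$-equivalence on each of the three pieces: on the $C(G)$- and $C(G)\otimes C(E_{d-1}G)$-pieces by the Fell trick (the latter being free in the $G$-direction, hence induced and thus controlled by $\Res_G\theta\otimes\id$), and on the $C(E_{d-1}G)$-piece by the inductive hypothesis. The five lemma in the triangulated category $\KK^G$ then gives the claim for $C(E_dG)$. The technical heart is making the equivariant Mayer--Vietoris precise, equivalently exploiting the iterated cone description $\cone(E_dG)\cong\cone(G)^{\times d}$ recorded above; alternatively, this lemma is subsumed by the relative homological machinery of \cite{mathOA150806815}.

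Finally, for the UCT statement, when $\Res_G A$ and $\Res_G B$ lie in the UCT class the natural map $\KK(\Res_G A,\Res_G B)\to\Hom(\K_*(\Res_G A),\K_*(\Res_G B))$ from the universal coefficient sequence detects invertibility, so $\Res_G\phi$ is a $\KK$-equivalence if and only if $(\Res_G\phi)_*$ is an isomorphism on $\K$-theory. Combining this with the first part yields the stated criterion.
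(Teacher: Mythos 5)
Your argument is correct, but it takes a different route from the paper: where you do real work, the paper cites. The paper's proof is a single sentence --- after obtaining $\J_G^d$-injectivity of $A$ and $B$ from Theorem \ref{thm:Rokinj} and Proposition \ref{prp:RokcRok} (your first step, identically), it invokes Theorem 3.4 of \cite{mathOA150806815}, the companion paper's relative-homological result that restriction reflects $\KK^G$-equivalences between such objects, together with Lemma \ref{lem:avbdl} and the UCT \cite{MR894590}. Your proposal reconstructs the content hidden in that citation: the two-sided splitting trick (the left inverse $\psi:=r_A\circ(\phi\otimes\id)^{-1}\circ\iota_B$, a second application to $\psi$, and the formal identity $\chi=\phi$) is the purely formal part, and the key lemma --- $\Res_G\theta$ invertible forces $\theta\otimes\id_{C(E_dG)}$ invertible --- is where the substance sits; your proof of it by Fell untwisting on induced pieces plus induction over the join $E_dG=G\ast E_{d-1}G$ is sound. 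Two refinements for the Mayer--Vietoris step: work with the closed cover $\{t\le 1/2\}$, $\{t\ge 1/2\}$ of the join rather than open neighborhoods of the two ends (the closed halves are compact, their retractions onto $G$ and $E_{d-1}G$ are automatically proper, and the restriction maps onto $C(G\times E_{d-1}G)$ are surjective and equivariantly semisplit by averaging a cp splitting over the compact group, giving the exact triangle in $\KK^G$; with open sets the comparison of $C_0(U)$ with $C(E_{d-1}G)$ is awkward because the retraction is not proper); and note that the square involving the boundary map commutes not by naturality for $\ast$-homomorphisms (which $\theta$ is not) but by commutativity of external Kasparov products, $(\theta\otimes\id)\circ(\id\otimes\partial)=(\id\otimes\partial)\circ(\theta\otimes\id)$. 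What your route buys: a self-contained proof that never uses Lemma \ref{lem:avbdl}, makes the geometric mechanism visible (freeness of the Milnor space $E_dG$), and works for arbitrary compact $G$ since the join induction needs no local triviality of $E_dG\to E_dG/G$. What the paper's route buys: brevity, and consistency with the machinery of \cite{mathOA150806815}, which yields considerably more than this one corollary. The UCT step is the same in both.
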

\begin{proof}
It follows from Theorem \ref{thm:Rokinj}, Proposition \ref{prp:RokcRok}, Lemma \ref{lem:avbdl}, Theorem 3.4 of \cite{mathOA150806815} and the universal coefficient theorem (Proposition 7.3 of \cite{MR894590}).
\end{proof}

\begin{remk}
In fact, $\J_G$-injectivity is more relevant to asymptotic innerness of the dual action rather than asymptotic representability, which we need for continuous Rokhlin property (see Remark \ref{rmk:inner}).
For example, the crossed product of the infinite tensor product on a $\Cst$-algebra by the Bernoulli shift of $\zahl$ is a $\J_\mathbb{T}$-injective $\mathbb{T}$-$\Cst$-algebra without continuous Rokhlin property.
\end{remk}

\begin{lem}\label{lem:avbdl}
Let $A$ be a $G$-$\Cst$-algebra with Rokhlin property. Then, there is an isomorphism $G \ltimes A \cong A^\alpha \otimes \Kop$. In particular, if $A$ and $B$ are $G$-$\Cst$-algebras with Rokhlin property which are $\KK^G$-equivalent, then $A^\alpha$ and $B^\beta$ are $\KK$-equivalent.
\end{lem}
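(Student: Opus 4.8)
The plan is to prove the crossed‑product isomorphism by an imprimitivity argument and then read off the $\KK$‑theoretic statement formally. First I would recall that the Rokhlin property forces the action $\alpha$ to be \emph{saturated} in Rieffel's sense. Regard $A$ as a right Hilbert $A^\alpha$‑module $\mathcal X$ with inner product $\langle a,b\rangle = E(a^*b)$, where $E(a)=\int_G \alpha_g(a)\,dg$ is the canonical conditional expectation onto $A^\alpha$; left multiplication by $A$ together with the unitaries implementing $\alpha$ assemble into a nondegenerate representation of $G\ltimes A$ by adjointable operators on $\mathcal X$. Saturation is the statement that the $(G\ltimes A)$‑valued inner products generate all of $G\ltimes A$, and I would verify it using the approximately central equivariant unital embedding $C(G)\to A_{\alpha,\infty}$ (with $C(G)$ carrying the left‑translation action): it shows that every spectral subspace $A_\pi$, $\pi\in\hat G$, is full over $A^\alpha$. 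Rieffel's imprimitivity theorem then gives $G\ltimes A\cong\Kop_{A^\alpha}(\mathcal X)$, hence a Morita equivalence $G\ltimes A\sim_M A^\alpha$.

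The remaining point is to identify $\mathcal X$ with the standard module $H_{A^\alpha}=A^\alpha\otimes\ell^2$, which converts $\Kop_{A^\alpha}(\mathcal X)$ into $\Kop(H_{A^\alpha})=A^\alpha\otimes\Kop$ (note that $\Kop(L^2(G))=\Kop$ here, since the relevant $G$ is an infinite compact group). Kasparov's stabilization theorem always yields $\mathcal X\oplus H_{A^\alpha}\cong H_{A^\alpha}$, so it suffices to see the reverse absorption $\mathcal X\cong\mathcal X\oplus H_{A^\alpha}$, i.e. infinite multiplicity. This is exactly what the Rokhlin tower supplies: the embedding $C(G)\to A_{\alpha,\infty}$ together with the imprimitivity identification $G\ltimes C(G)\cong\Kop(L^2(G))$ produces an approximately central system of matrix units in $G\ltimes A$ indexed by an orthonormal basis of the infinite‑dimensional space $L^2(G)$, and these split off infinitely many orthogonal copies of $A^\alpha$ inside $\Kop_{A^\alpha}(\mathcal X)$. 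I expect promoting these asymptotically central matrix units into an honest stabilizing decomposition — by an Elliott‑type intertwining that converts the central‑sequence (approximate) data into a genuine isomorphism — to be the main obstacle of the argument.

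Granting the isomorphism, the final assertion is formal. A $\KK^G$‑equivalence $x\in\KK^G(A,B)$ is carried by Kasparov's descent functor $j_G\colon\KK^G(A,B)\to\KK(G\ltimes A,G\ltimes B)$ to an invertible element, since descent is functorial and multiplicative and so preserves invertibles; thus $G\ltimes A$ and $G\ltimes B$ are $\KK$‑equivalent. Transporting along the isomorphisms just established gives a $\KK$‑equivalence $A^\alpha\otimes\Kop\simeq B^\beta\otimes\Kop$. Finally, $\Kop$‑stability of $\KK$‑theory — the corner inclusion $C\to C\otimes\Kop$ is a $\KK$‑equivalence for every separable $C$ — identifies $A^\alpha$ with $A^\alpha\otimes\Kop$ and $B^\beta$ with $B^\beta\otimes\Kop$ in $\KK$, whence $A^\alpha$ and $B^\beta$ are $\KK$‑equivalent.
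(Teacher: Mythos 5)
Your Rieffel-theoretic route is genuinely different from the paper's, and its first half can be made to work: the Rokhlin property does imply saturation (your fullness-of-spectral-subspaces sketch via the equivariant embedding $C(G)\to A_{\alpha,\infty}$ is the right mechanism), so $A$, viewed as the right Hilbert $A^\alpha$-module $\mathcal{X}$ with inner product $E(a^*b)$, becomes a $(G\ltimes A)$--$A^\alpha$ imprimitivity bimodule. Note that this alone already yields the ``in particular'' clause with no further work: an imprimitivity bimodule implements a $\KK$-equivalence between $G\ltimes A$ and $A^\alpha$, and since descent preserves invertibility one gets $A^\alpha \sim_{\KK} G\ltimes A \sim_{\KK} G\ltimes B \sim_{\KK} B^\beta$ without ever identifying $\mathcal{X}$ with a standard module.

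The genuine gap is precisely the step you flag as ``the main obstacle'': the first assertion of the lemma, the honest isomorphism $G\ltimes A\cong A^\alpha\otimes\Kop$, does not follow from Morita equivalence. Brown--Green--Rieffel gives only $(G\ltimes A)\otimes\Kop\cong A^\alpha\otimes\Kop$; to remove the extra stabilization you need $\mathcal{X}\cong \ell^2\otimes A^\alpha$, hence the absorption $\mathcal{X}\oplus(\ell^2\otimes A^\alpha)\cong\mathcal{X}$, and your plan for this (asymptotically central matrix units promoted by ``an Elliott-type intertwining'') is a gesture rather than an argument --- converting approximately central data into an exact direct-sum decomposition of a Hilbert module is exactly where all the work of the lemma is concentrated, and in practice fixing it amounts to re-proving what the paper proves. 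The paper's proof is designed to avoid any central-sequence bookkeeping: it writes $G\ltimes A=\left(A\otimes\Kop(L^2(G))\right)^G$ and $A^\alpha\otimes\Kop=(A\otimes\Kop)^G$ (trivial action on $\Kop$), and constructs a $G$-equivariant isomorphism $A\otimes\Kop(L^2(G))\cong A\otimes\Kop$ one finite-dimensional representation at a time: for $\Hilb_0,\Hilb_1$ of equal finite dimension there is an equivariant unitary $u_0:\Hilb_0\otimes C(G)\to\Hilb_1\otimes C(G)$, and pushing the invariant odd element $u=\left(\begin{smallmatrix}0&u_0^*\\ u_0&0\end{smallmatrix}\right)$ through the exactly $G$-equivariant Rokhlin averaging map produces a $G$-invariant approximate unitary in $A\otimes\Bop(\Hilb)$, which a polar-decomposition correction turns into a genuine invariant unitary; correcting a single element requires no intertwining, and summing over the Peter--Weyl decomposition of $L^2(G)$ finishes. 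This explicit construction also has a feature your isomorphism, even if completed, would not automatically provide and which the paper uses later (Theorem \ref{thm:KKHod}): it carries the trivial-representation projection $p\in C^*_\lambda(G)\subset M(G\ltimes A)$ to a rank-one projection in $\Kop$.
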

\begin{proof}
It suffices to show that $A \otimes \Kop(L^2(G))\cong A \otimes \Kop$ because $(A \otimes \Kop)^G=A^\alpha \otimes \Kop$ and $(A \otimes \Kop (L^2(G)))^G=G \ltimes A$. 
To this end, we show that for any two unitary representations $\Hilb _0$ and $\Hilb _1$ with the same (finite) dimension, $\Hilb _0 \otimes A \cong \Hilb_1 \otimes A$ as $G$-equivariant Hilbert $A$-modules. 

Let $u_0$ be an equivariant isomorphism $\Hilb _0 \otimes C(G)\to \Hilb _1 \otimes C(G)$ and set $u:=\pmx{0 & u_0^* \\ u_0 & 0} \in \Bop (\Hilb) \otimes C(G)$ where $\Hilb :=\Hilb _0 \oplus \Hilb _1$. 
Since $A$ has Rokhlin property, there is a $G$-equivariant completely positive map $\varphi ':= \varphi \otimes \id _{\Bop (\Hilb)} :C(G)\otimes \Bop (\Hilb ) \to A \otimes \Bop (\Hilb)$ such that $\ssbk{\varphi'(u)^*\varphi'(u)-1}<\varepsilon$ and $\ssbk{\varphi'(u)\varphi'(u)^*-1}<\varepsilon$. Now $\tilde{u}:=\varphi' (u)|\varphi' (u)|^{-1/2}$ is a $G$-invariant odd unitary on $\Hilb \otimes A$ which induces the $G$-equivariant isomorphism from $\Hilb _0 \otimes A$ to $\Hilb _1 \otimes A$.
\end{proof}

\begin{cor}
Let $A$ be a $\Cst$-algebra and let $\{ \alpha _t\}$ be a homotopy of $G$-actions on $A$ such that $\alpha _0$ and $\alpha _1$ has finite (continuous) Rokhlin dimension with commuting tower. Then $(A,\alpha _0)$ and $(A, \alpha _1)$ are $\KK^G$-equivalent. Moreover, if $\alpha _0$ and $\alpha _1$ have Rokhlin property, the fixed point subalgebras $A^{\alpha_0}$ and $A^{\alpha _1}$ are $\KK$-equivalent.
\end{cor}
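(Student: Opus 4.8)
The plan is to realize the homotopy as a single interpolating $G$-$\Cst$-algebra and then transport the (evident) non-equivariant equivalence into $\KK^G$ using the $\J_G$-injectivity of the two endpoints. First I would encode the homotopy $\{\alpha_t\}$ as a $G$-action $\tilde\alpha$ on the cylinder algebra $B:=C([0,1],A)$ by $\tilde\alpha_g(f)(t):=\alpha_{t,g}(f(t))$; the evaluations $\ev_0,\ev_1\colon B\to(A,\alpha_0),(A,\alpha_1)$ are then $G$-equivariant $\ast$-homomorphisms. Forgetting the $G$-action, $\Res_G\ev_i$ is ordinary evaluation on the contractible interval, whose homotopy inverse is the constant inclusion $c\colon A\to C([0,1],A)$; hence each $[\ev_i]$ is a $\J_G$-equivalence (that is, $\Res_G[\ev_i]$ is a $\KK$-equivalence), and moreover $\Res_G[\ev_1]\circ(\Res_G[\ev_0])^{-1}=[\ev_1\circ c]=\id_A$ in $\KK(A,A)$.

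The step requiring care is that one may \emph{not} feed $\ev_0$ or $\ev_1$ directly into Corollary \ref{cor:equiv}: the cylinder $(B,\tilde\alpha)$ has no reason to have finite continuous Rokhlin dimension, since the intermediate actions $\alpha_t$ are not assumed to, so its fibrewise central path algebra carries no Rokhlin tower. Instead I would exploit that the endpoint $(A,\alpha_1)$ is $\J_G$-injective (Theorem \ref{thm:Rokinj} together with Proposition \ref{prp:RokcRok}), hence a local object for the class of $\J_G$-equivalences in the relative homological algebra of \cite{mathOA150806815}. Locality says precisely that precomposition with the $\J_G$-equivalence $\ev_0$ is a bijection $(\ev_0)^{\ast}\colon \KK^G((A,\alpha_0),(A,\alpha_1))\xrightarrow{\ \cong\ }\KK^G(B,(A,\alpha_1))$, so there is a unique class $\phi\in\KK^G((A,\alpha_0),(A,\alpha_1))$ with $\phi\circ[\ev_0]=[\ev_1]$.

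It then remains to identify $\phi$ under restriction and to promote it. Applying $\Res_G$ to $\phi\circ[\ev_0]=[\ev_1]$ and inverting the $\KK$-equivalence $\Res_G[\ev_0]$ gives $\Res_G\phi=\id_A$, a $\KK$-equivalence; since both $(A,\alpha_0)$ and $(A,\alpha_1)$ have finite continuous Rokhlin dimension with commuting tower, Corollary \ref{cor:equiv} upgrades $\phi$ to a $\KK^G$-equivalence, which proves the first assertion. For the second assertion, when $\alpha_0$ and $\alpha_1$ have Rokhlin property the $\KK^G$-equivalence $\phi$ feeds into Lemma \ref{lem:avbdl}, yielding a $\KK$-equivalence of the fixed-point algebras $A^{\alpha_0}\simeq A^{\alpha_1}$.

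I expect the main obstacle to be exactly this bridging step: one must resist applying Corollary \ref{cor:equiv} to $\ev_0$ itself, and instead manufacture the morphism $\phi$ purely from the $\J_G$-injectivity (locality) of the \emph{target} $(A,\alpha_1)$, only afterwards invoking Corollary \ref{cor:equiv} on the endpoints. Checking that the concrete notion of $\J_G$-injectivity used in the excerpt genuinely supplies this locality — equivalently, that $\KK^G$-maps \emph{into} a $\J_G$-injective object are insensitive to $\J_G$-equivalences in the source — is the technical heart, and it is the same input from \cite{mathOA150806815} that already powers Corollary \ref{cor:equiv}.
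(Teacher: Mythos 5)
Your proposal is correct and is essentially the paper's own route: the paper's proof is a one-line citation of Theorem \ref{thm:Rokinj}, Proposition \ref{prp:RokcRok}, Lemma \ref{lem:avbdl} and Corollary 3.5 of \cite{mathOA150806815}, and the cylinder-plus-locality argument you spell out (encode the homotopy on $C([0,1],A)$, note that the evaluations are $\J_G$-equivalences, use injectivity of an endpoint to produce $\phi$ with $\phi\circ[\ev_0]=[\ev_1]$, then upgrade via Corollary \ref{cor:equiv} and finish with Lemma \ref{lem:avbdl}) is precisely the content packaged into that last citation. So you have unpacked the black box rather than found a different path.

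One imprecision sits exactly at what you call the technical heart, though it is harmless. Under the stated hypothesis (finite Rokhlin dimension with commuting tower, not Rokhlin property itself), Theorem \ref{thm:Rokinj} together with Proposition \ref{prp:RokcRok} yields only $\J_G^{d}$-injectivity of $(A,\alpha_1)$ for some finite $d\geq 1$, not $\J_G$-injectivity as you assert; the two notions differ, since $\J_G^{d}$-injectivity is the weaker one. Your locality argument survives this weakening with a one-line amendment: the mapping cone $C$ of $\ev_0$ satisfies $\id_C\in\J_G$ because $\Res_G\ev_0$ is a $\KK$-equivalence, hence $\id_C=\id_C\circ\cdots\circ\id_C$ ($d$-fold composition) lies in $\J_G^{d}$, so $\KK^G(C,N)$ and $\KK^G(SC,N)$ vanish for every $\J_G^{d}$-injective $N$, and the bijectivity of $(\ev_0)^{\ast}\colon\KK^G((A,\alpha_0),N)\to\KK^G(B,N)$ follows from the same long exact sequence. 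With $\J_G^{d}$ substituted for $\J_G$ throughout, the remaining steps --- $\Res_G\phi=\id_A$, the upgrade to a $\KK^G$-equivalence via Corollary \ref{cor:equiv}, and Lemma \ref{lem:avbdl} for the fixed-point algebras --- go through exactly as you wrote them.
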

\begin{proof}
It follows from Theorem \ref{thm:Rokinj}, Proposition \ref{prp:RokcRok}, Lemma \ref{lem:avbdl} and Corollary 3.5 of \cite{mathOA150806815}.
\end{proof}

\section{Continuous Rokhlin action of Hodgkin Lie groups}
In this section, we focus on the case that $G$ is a compact Lie group with Hodgkin condition, that is, $G$ is connected and $\pi _1(G)$ is torsion-free. 
An important feature of Hodgkin Lie groups is that the dual quantum group $\hat{G}$ is torsion-free in the sense of Section 7.2 of \cite{MR2563811}. Together with the strong Baum-Connes conjecture for $\dG$ (Corollary 3.4 of  \cite{MR2339371}), we obtain the following.
\begin{prp}\label{prp:BC}
Let $G$ be a Hodgkin Lie group. A $\KK^G$-morphism $\xi \in \KK^G(A,B)$ is a $\KK ^G$-equivalence if and only if $G \ltimes \xi \in KK(G \ltimes A, G \ltimes B)$ is a $\KK$-equivalence. 
\end{prp}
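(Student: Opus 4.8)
The plan is to prove the two implications by entirely different means: the ``only if'' part is purely formal, while the ``if'' part is where the Baum--Connes property of $\hat G$ does all the work. For the forward implication I would simply invoke functoriality of Kasparov descent. The assignment $G \ltimes - \colon \KK^G \to \KK$ is compatible with the Kasparov product and carries identity morphisms to identity morphisms, so if $\xi$ admits a two-sided inverse $\eta \in \KK^G(B,A)$, then $G \ltimes \eta$ is a two-sided inverse of $G \ltimes \xi$, and the latter is a $\KK$-equivalence.

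For the converse I would first pass to the dual picture. By Baaj--Skandalis duality the descent refines to an isomorphism $\KK^G(A,B) \cong \KK^{\hat G}(G \ltimes A, G \ltimes B)$ implementing a triangulated equivalence $\KK^G \simeq \KK^{\hat G}$; write $\hat\xi \in \KK^{\hat G}(G \ltimes A, G \ltimes B)$ for the refinement of $G \ltimes \xi$. Under this equivalence $\xi$ is a $\KK^G$-equivalence if and only if $\hat\xi$ is a $\KK^{\hat G}$-equivalence, and $G \ltimes \xi$ is recovered from $\hat\xi$ by applying the forgetful functor $\Res \colon \KK^{\hat G} \to \KK$ that discards the $\hat G$-coaction. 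Thus the entire content reduces to the assertion that $\Res$ reflects isomorphisms.

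To establish this I would use the mapping-cone criterion together with the adjunction $\KK^{\hat G}(\Ind_e^{\hat G} D, -) \cong \KK(D, \Res(-))$ between restriction and induction from the trivial quantum subgroup. Let $C$ be the mapping cone of $\hat\xi$ in $\KK^{\hat G}$; since $\Res$ is triangulated, $\Res C$ is the mapping cone of $G \ltimes \xi$, which vanishes by hypothesis. The adjunction then yields $\KK^{\hat G}(S^n \Ind_e^{\hat G} D, C) \cong \KK(S^n D, \Res C) = 0$ for every $\Cst$-algebra $D$ and every $n$. Now the strong Baum--Connes isomorphism for $\hat G$ (Corollary 3.4 of \cite{MR2339371}) asserts that $\KK^{\hat G}$ is the localizing subcategory generated by the compactly induced algebras, and the torsion-freeness of $\hat G$ in the sense of Section 7.2 of \cite{MR2563811} allows these to be taken induced from the trivial subgroup. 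The full subcategory of objects $P$ with $\KK^{\hat G}(S^n P, C)=0$ for all $n$ is closed under suspensions, countable direct sums and exact triangles, hence is localizing, and it contains every $\Ind_e^{\hat G} D$; therefore it is all of $\KK^{\hat G}$. Taking $P = C$ forces $\mathrm{id}_C = 0$, so $C \cong 0$ and $\hat\xi$ — and therefore $\xi$ — is an equivalence.

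The main obstacle is the precise form of the generation statement. One needs strong Baum--Connes for the \emph{torsion-free} $\hat G$ to guarantee that $\KK^{\hat G}$ is generated by objects induced from the \emph{trivial} subgroup, since only then does the induction--restriction adjunction land in plain $\KK$ and make $\Res$ conservative; if $\hat G$ had torsion, the generating class would involve induction from nontrivial finite quantum subgroups and the reduction to ordinary $\KK$ would break down. This is exactly the point at which the Hodgkin hypothesis, through torsion-freeness of $\hat G$, is indispensable.
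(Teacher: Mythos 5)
Your proof is correct and takes essentially the same route as the paper: the paper obtains the proposition directly from torsion-freeness of $\dG$ (Section 7.2 of \cite{MR2563811}) combined with the strong Baum--Connes isomorphism for $\dG$ (Corollary 3.4 of \cite{MR2339371}), rephrased via the crossed-product functor, which is exactly the content you unpack. The paper treats this as an immediate consequence of the citations, while your write-up supplies the standard Meyer--Nest localization argument (Baaj--Skandalis duality, the induction--restriction adjunction, and the mapping-cone/localizing-subcategory reasoning) that makes the crossed-product functor conservative.
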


\begin{thm}\label{thm:KKHod}
Let $G$ be a compact Lie group with Hodgkin condition. Then, for any $G$-algebra $A$ with continuous Rokhlin property, there is a $\KK^G$-equivalence from $A$ to $C(G) \otimes A^\alpha$ mapping $[1_A] \in \K^G_0(A)$ to $[1_{A^\alpha \otimes C(G)}]$.
\end{thm}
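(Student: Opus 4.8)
The plan is to construct an explicit $\KK^G$-morphism in the direction $C(G) \otimes A^\alpha \to A$, prove it is a $\KK^G$-equivalence, and then invert it. Let $\varphi \colon C(G) \to \mathfrak{C}_\alpha A$ be a continuous Rokhlin map. Since $\mathfrak{C}_\alpha A = \mathfrak{A}_\alpha A \cap A'$, the range of $\varphi$ commutes with the image of the inclusion $\iota_0 \colon A^\alpha \hookrightarrow A \subset \mathfrak{A}_\alpha A$; as $C(G)$ is nuclear, the pair $(\varphi, \iota_0)$ with commuting ranges determines a $G$-equivariant $\ast$-homomorphism $C(G) \otimes A^\alpha \to \mathfrak{A}_\alpha A$, that is, a $G$-equivariant completely positive asymptotic morphism $\mu \colon C(G) \otimes A^\alpha \rightsquigarrow A$, hence a class $[\mu] \in \KK^G(C(G)\otimes A^\alpha, A)$ through the picture \eqref{form:KKcp}. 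Because $\varphi(1)=1$, we have $\mu(1\otimes 1)=1_A$, so $[\mu]$ sends $[1_{C(G)\otimes A^\alpha}]$ to $[1_A]$; the equivalence asserted by the theorem will then be $[\mu]^{-1}$.

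To prove that $[\mu]$ is a $\KK^G$-equivalence I would invoke Proposition \ref{prp:BC}: since $G$ is a Hodgkin Lie group, it suffices to show that $G \ltimes [\mu]$ is a $\KK$-equivalence, and here both crossed products are computable. Continuous Rokhlin property implies Rokhlin property (restrict the path to $\zahl_{>0}$), so Lemma \ref{lem:avbdl} gives $G \ltimes A \cong A^\alpha \otimes \Kop$. On the other side the $G$-action on $C(G)\otimes A^\alpha$ is by translation on $C(G)$ and trivially on $A^\alpha$, so by the imprimitivity theorem $G \ltimes (C(G) \otimes A^\alpha) \cong \Kop(L^2(G)) \otimes A^\alpha$, naturally in $A^\alpha$. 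In particular both crossed products are $\KK$-equivalent to $A^\alpha$ via the canonical full-corner inclusions $\kappa_1 \colon A^\alpha \to G\ltimes(C(G)\otimes A^\alpha)$ and $\kappa_2 \colon A^\alpha \to G\ltimes A$ coming from the respective fixed-point subalgebras.

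The heart of the argument, and the step I expect to be the main obstacle, is to show that $(G\ltimes[\mu]) \circ [\kappa_1] = [\kappa_2]$ in $\KK(A^\alpha, G\ltimes A)$; granting this, two-out-of-three forces $G\ltimes[\mu]$ to be a $\KK$-equivalence, and the theorem follows by inverting and reading off the unit class. The conceptual reason the identity should hold is that the fixed-point algebra $(C(G)\otimes A^\alpha)^G$ consists exactly of the constant functions, i.e.\ it is the image of $1\otimes\id \colon A^\alpha \to C(G)\otimes A^\alpha$, while $\mu\circ(1\otimes\id)=\iota_0$ is simply the inclusion $A^\alpha \hookrightarrow A$ whose fixed-point corner is $\kappa_2$. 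Thus the required identity is a naturality statement for the fixed-point corner inclusion under the morphism $[\mu]$. The delicate point is that $\mu$ is only an asymptotic morphism into the path algebra, so this compatibility must be verified at the level of $\KK$-classes rather than for honest equivariant $\ast$-homomorphisms, carefully matching the Morita equivalence of Lemma \ref{lem:avbdl} (built from the Rokhlin projections) against the imprimitivity isomorphism and the two corner embeddings.
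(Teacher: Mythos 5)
Your route is the same as the paper's: your class $[\mu]$ is precisely the paper's $\xi=[\psi|_{A^\alpha\otimes C(G)}]$, the reduction via Proposition \ref{prp:BC} is identical, and so is the plan of playing $G\ltimes[\mu]$ off against two corner embeddings of $A^\alpha$ and concluding by two-out-of-three. However, the identity you explicitly defer --- $(G\ltimes[\mu])\circ[\kappa_1]=[\kappa_2]$ --- is the entire content of the paper's proof, so as written your proposal has a genuine gap exactly there; moreover the verification you envisage (matching the isomorphism of Lemma \ref{lem:avbdl} against the imprimitivity isomorphism ``at the level of $\KK$-classes'') is harder than necessary and is not what the paper does.

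The paper's trick is to keep the Morita/imprimitivity isomorphisms out of the compatibility check entirely, by factoring the corner embeddings through $G\ltimes A^\alpha$. Let $p\in C^*_\lambda(G)$ be the projection of the trivial representation and let $\mu_0:\comp p\otimes A^\alpha\to C^*_\lambda(G)\otimes A^\alpha=G\ltimes A^\alpha$ be the corner inclusion (here $A^\alpha$ carries the trivial $G$-action); let $\iota_1:A^\alpha\to A^\alpha\otimes C(G)$, $a\mapsto a\otimes 1$, and $\iota_2:A^\alpha\hookrightarrow A$ be the canonical \emph{equivariant} $\ast$-homomorphisms. Then, up to the identification $A^\alpha\cong\comp p\otimes A^\alpha$, one has $[\kappa_k]=(G\ltimes[\iota_k])\circ[\mu_0]$ for $k=1,2$. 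Now the observation you already made --- that $\mu\circ(1\otimes\id)$ is exactly your $\iota_0$, the inclusion $A^\alpha\hookrightarrow A$ --- says that $[\mu]\circ[\iota_1]=[\iota_2]$ holds in $\KK^G(A^\alpha,A)$, i.e.\ \emph{before} taking crossed products, where none of the delicacy you worry about arises: both sides are equal on the nose as asymptotic morphisms under the picture \eqref{form:KKcp}, using only $\varphi(1)=1$ and centrality of the range of $\varphi$. Applying the descent functor $G\ltimes(-)$ and composing with $[\mu_0]$ then yields $(G\ltimes[\mu])\circ[\kappa_1]=[\kappa_2]$ purely by functoriality. One further point needs justification: fullness of the corner $p(G\ltimes A)p\cong A^\alpha$ is not automatic for a compact group action (it amounts to saturation), so your claim that $\kappa_2$ is a \emph{full}-corner inclusion requires the Rokhlin property; the paper gets it from Lemma \ref{lem:avbdl}, whose isomorphism $G\ltimes A\cong A^\alpha\otimes\Kop$ carries $p$ to a rank-one projection, so that both $[\kappa_1]$ and $[\kappa_2]$ are $\KK$-equivalences. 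With these two repairs your argument closes and coincides with the paper's.
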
 
\begin{proof}
By Proposition \ref{prp:BC}, it suffices to show that $G \ltimes \xi$ is a $\KK$-equivalence for 
\[
\xi:=[\psi |_{A^\alpha \otimes C(G)}] \in \KK^G(A^\alpha \otimes C(G), A).
\]

Let $p$ be the projection in $C^*_\lambda(G)$ corresponding to the trivial representation and consider the canonical inclusions $\iota_1: A^\alpha \to A^\alpha \otimes C(G)$, $\iota_2 : A^\alpha \to A$ and $\mu: \comp p \otimes A^\alpha \to C^*_\lambda(G) \otimes A^\alpha$. Since the restriction of $\psi$ to $A^\alpha $ is the identity, we have $\xi \circ [\iota_1]=[\iota_2]$ and hence $(G \ltimes \xi) \circ (G \ltimes [\iota_1])=G \ltimes [\iota_2]$. We also have $G \ltimes [\iota_k] \circ [\mu]$ are $\KK$-equivalences for $k=1,2$ since the isomorphism in Lemma \ref{lem:avbdl} maps $p$ to a $1$-dimensional projection in $\Kop$. Hence $G \ltimes \xi=(G \ltimes [i_2] \circ [\mu])(G \ltimes [i_1] \circ [\mu])^{-1}$ is a $\KK$-equivalence.
\end{proof}

\begin{remk}\label{rmk:inner}
In fact, $\KK^G$-equivalence $A \sim B \otimes C(G)$ for some $B$ holds under weaker assumption for the action. As in \cite{mathOA11072512}, we can apply the Baum-Connes conjecture for the dual of Hodgkin Lie groups to the path of cocycle actions using the duality \cite{MR1970242} and we obtain any $G$-$\Cst$-algebra is $\KK^G$-equivalent to $C(G) \otimes B$ for some $B$ if its dual action is asymptotically inner.
\end{remk}

The following corollary is an analogue of Theorem 5.5 of \cite{mathOA14052469} for $\mathbb{T}$-$\Cst$-dynamical systems.

\begin{cor}\label{cor:Krid}
Let $G$ be a Hodgkin Lie group and let $A$ be a $G$-$\Cst$-algebra with continuous Rokhlin property. Then, there is a countable abelian group $M$ such that $\K$-groups $\K_*(A)$ is isomorphic to $M _*^n$ where $n:=2^{\rank G -1}$. Moreover, in this case $M \cong \K_0(A^\alpha) \oplus \K_1(A^\alpha)$.
\end{cor}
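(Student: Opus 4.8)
The plan is to reduce the computation of $\K_*(A)$ to that of $\K_*(C(G))$ by means of the $\KK^G$-equivalence of Theorem \ref{thm:KKHod}, and then to combine the Künneth formula with the classical computation of the $\K$-theory of a Hodgkin Lie group.

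First I would apply $\Res_G$ to the $\KK^G$-equivalence $A \sim C(G) \otimes A^\alpha$ supplied by Theorem \ref{thm:KKHod}. Forgetting the $G$-action turns it into a (non-equivariant) $\KK$-equivalence, so $\K_*(A) \cong \K_*(C(G) \otimes A^\alpha)$, and it suffices to compute the right-hand side. Since $A$ is separable, so is $A^\alpha$, whence $M := \K_0(A^\alpha) \oplus \K_1(A^\alpha)$ is countable, as required.

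Next I would invoke Hodgkin's theorem on the $\K$-theory of Lie groups: for a compact connected Lie group $G$ with torsion-free fundamental group, $\K^*(G) = \K_*(C(G))$ is the exterior $\zahl$-algebra on $\ell := \rank G$ generators, all of odd degree. In particular $\K_*(C(G))$ is free of finite rank, and because the exterior algebra on $\ell$ odd generators has equal even and odd parts of rank $\sum_{k \text{ even}}\binom{\ell}{k} = \sum_{k \text{ odd}}\binom{\ell}{k} = 2^{\ell-1}$, one gets $\K_0(C(G)) \cong \K_1(C(G)) \cong \zahl^{n}$ with $n = 2^{\rank G - 1}$. As $C(G)$ is commutative it lies in the bootstrap class, so the Künneth short exact sequence applies to $C(G) \otimes A^\alpha$; since $\K_*(C(G))$ is free the $\Tor$-term vanishes and
\[
\K_*(C(G) \otimes A^\alpha) \cong \K_*(C(G)) \otimes_{\zahl} \K_*(A^\alpha)
\]
as $\zahl/2$-graded groups. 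Using $\K_0(C(G)) \cong \K_1(C(G)) \cong \zahl^n$, the degree-$0$ part of the right-hand side is $(\zahl^n \otimes \K_0(A^\alpha)) \oplus (\zahl^n \otimes \K_1(A^\alpha)) \cong M^n$, and the degree-$1$ part equals $M^n$ by the same bookkeeping. Hence $\K_0(A) \cong \K_1(A) \cong M^n$ with $M \cong \K_0(A^\alpha) \oplus \K_1(A^\alpha)$, which is the assertion.

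There is no serious obstacle here beyond the inputs already in hand: the essential external ingredient is Hodgkin's computation of $\K^*(G)$ as an exterior algebra on $\rank G$ odd generators, which is exactly what forces the equal ranks $\K^0(G) \cong \K^1(G) \cong \zahl^n$. Everything else is the Künneth formula and tracking the $\zahl/2$-grading; the grading bookkeeping in the tensor product balances precisely because the even and odd parts of $\K^*(G)$ have the same rank $n$, which is also what makes $\K_0(A)$ and $\K_1(A)$ come out isomorphic.
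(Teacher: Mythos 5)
Your proof is correct, and it shares the paper's two main inputs: the reduction $\K_*(A) \cong \K_*(C(G) \otimes A^\alpha)$ obtained by applying $\Res_G$ to the $\KK^G$-equivalence of Theorem \ref{thm:KKHod}, and the Hodgkin--Hopf computation that $\K^0(G)$ and $\K^1(G)$ are torsion-free (indeed free) of rank $n = 2^{\rank G - 1}$. Where you diverge is in the final algebraic step: you run the K\"unneth short exact sequence of Rosenberg--Schochet on $C(G) \otimes A^\alpha$, killing the $\Tor$ term by freeness of $\K_*(C(G))$, whereas the paper instead uses the universal coefficient theorem to promote the $\K$-theoretic information into a $\KK$-equivalence $C(G) \sim \comp^n \oplus C_0(\real)^n$, tensors this equivalence with $A^\alpha$, and then reads off $\K_*\bigl((A^\alpha)^{\oplus n} \oplus (C_0(\real) \otimes A^\alpha)^{\oplus n}\bigr) \cong M^n$ from additivity and Bott periodicity, with no K\"unneth sequence at all. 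The two devices are essentially interchangeable here (both rest on $C(G)$ lying in the bootstrap class), and your grading bookkeeping is right: the even and odd parts each come out as $\K_0(A^\alpha)^n \oplus \K_1(A^\alpha)^n \cong M^n$ precisely because $\K^0(G)$ and $\K^1(G)$ have equal rank. What the paper's formulation buys is a reusable $\KK$-equivalence rather than a mere $\K$-group isomorphism; the same observation $C(G) \sim \comp^n \oplus C_0(\real)^n$ is invoked again in Corollary \ref{cor:classify} (to see that $A$ satisfies the UCT if and only if $A^\alpha$ does) and in Theorem \ref{thm:Kir}, so packaging it at the level of $\KK$ does more work downstream than the K\"unneth route would.
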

\begin{proof}
It follows from Theorem \ref{thm:KKHod}. We remark that $C(G)$ is $\KK$-equivalent to $\comp ^{n} \oplus C_0(\real)^n$, which follows from the fact that $\K^*(G)$ ($\ast =0,1$) are torsion-free abelian groups of rank $n$ (Theorem A (i) of \cite{MR0214099} and Hopf's theorem, see for example Theorem 1.34 of \cite{MR2403898}) and the universal coefficient theorem \cite{MR894590}.  
\end{proof}

\begin{cor}\label{cor:classify}
Let $G$ be a Hodgkin Lie group. Two $G$-$\Cst$-algebras with continuous Rokhlin property are $\KK^{G}$-equivalent  if and only if their fixed point algebras are $\KK$-equivalent. In particular, when these $\Cst$-algebras are in the UCT-class, then they are $\KK^{G}$-equivalent if and only if the $\K _*$-groups of fixed point algebras are isomorphic.
\end{cor}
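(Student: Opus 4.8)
The plan is to reduce everything to the product-type model furnished by Theorem \ref{thm:KKHod}, which identifies any $G$-$\Cst$-algebra with continuous Rokhlin property with $C(G) \otimes A^\alpha$ at the level of $\KK^G$. Write $(A,\alpha)$ and $(B,\beta)$ for the two algebras, with fixed point algebras $A^\alpha$ and $B^\beta$. The forward implication will be essentially free, the reverse implication is the substance, and the final clause is a formal application of the universal coefficient theorem.

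For the forward direction I would argue directly from Lemma \ref{lem:avbdl}. Since continuous Rokhlin property is stronger than Rokhlin property, both $A$ and $B$ are $G$-$\Cst$-algebras with Rokhlin property, so the ``in particular'' clause of Lemma \ref{lem:avbdl} shows that a $\KK^G$-equivalence $A \sim B$ forces $A^\alpha \sim_{\KK} B^\beta$. Equivalently, one may transport the $\KK^G$-equivalence $C(G)\otimes A^\alpha \sim C(G)\otimes B^\beta$ coming from Theorem \ref{thm:KKHod} through the crossed product functor, using $G \ltimes (C(G)\otimes D) \cong \Kop \otimes D$ for a trivially acted $D$, to recover $A^\alpha \sim_{\KK} B^\beta$.

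For the reverse direction, suppose $\phi \in \KK(A^\alpha, B^\beta)$ is a $\KK$-equivalence. I would regard $A^\alpha$ and $B^\beta$ as $G$-$\Cst$-algebras with the trivial action. Giving a Kasparov cycle the trivial $G$-structure is compatible with the Kasparov product, so it defines a functor $\KK \to \KK^G$ that sends invertible morphisms to invertible morphisms; tensoring with $[\id_{C(G)}]$ (a $\KK^G$-equivalence) then produces a $\KK^G$-equivalence $\xi := [\id_{C(G)}] \otimes \phi \in \KK^G(C(G)\otimes A^\alpha,\, C(G)\otimes B^\beta)$. Composing $\xi$ with the two $\KK^G$-equivalences of Theorem \ref{thm:KKHod} yields $A \sim B$. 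If one prefers to avoid this functoriality bookkeeping, the same conclusion follows from Proposition \ref{prp:BC}: under $G \ltimes (C(G)\otimes D) \cong \Kop \otimes D$ the crossed product $G \ltimes \xi$ is identified with $\id_{\Kop} \otimes \phi$, hence a $\KK$-equivalence, so $\xi$ is a $\KK^G$-equivalence. I expect this step---promoting a non-equivariant equivalence of the fixed point algebras to an equivariant equivalence of the models while correctly tracking the translation action on the $C(G)$ factor---to be the main point; everything else is formal once Theorem \ref{thm:KKHod} is available.

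Finally, for the ``in particular'' clause I would invoke the universal coefficient theorem. When $A$ and $B$ lie in the UCT class, so do $A^\alpha$ and $B^\beta$, since by Theorem \ref{thm:KKHod} each fixed point algebra is a $\KK$-retract of $C(G)\otimes A^\alpha$ (via the unit inclusion $\comp \to C(G)$ and evaluation at the identity) and the bootstrap class is closed under retracts. For separable $\Cst$-algebras in the UCT class, a $\KK$-equivalence exists if and only if the $\zahl/2$-graded $\K$-theory groups are abstractly isomorphic; combined with the equivalence ``$\KK^G$-equivalent $\Leftrightarrow$ fixed point algebras $\KK$-equivalent'' established above, this gives the stated criterion $\K_*(A^\alpha) \cong \K_*(B^\beta)$.
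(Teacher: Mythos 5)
Your proposal is correct and follows essentially the same route as the paper's proof: the forward implication is the ``in particular'' clause of Lemma \ref{lem:avbdl}, the reverse implication lifts the $\KK$-equivalence of fixed point algebras (with trivial $G$-action) to a $\KK^G$-equivalence of the models $C(G)\otimes A^\alpha$ and $C(G)\otimes B^\beta$ provided by Theorem \ref{thm:KKHod}, and the final clause is the universal coefficient theorem. The only cosmetic difference is in transferring the UCT from $A$ to $A^\alpha$: the paper uses the $\KK$-equivalence $C(G)\sim \comp^{n}\oplus C_0(\real)^{n}$, while you use a retract argument via the unit inclusion and evaluation at the identity; both are equally valid.
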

\begin{proof}
It follows from Lemma \ref{lem:avbdl} and Theorem \ref{thm:KKHod}.
Note that $A$ is UCT if and only if so is $A^\alpha $ because $C(G)$ is $\KK$-equivalent to $\comp ^{n} \oplus C_0(\real)^n$.  
\end{proof}

\section{Kirchberg $G$-algebras with continuous Rokhlin property}\label{section:4}
In this section, we give a complete classification of Kirchberg $G$-algebras with continuous Rokhlin property up to conjugacy in terms of (equivariant) $\KK$-theory.

\subsection{Preliminaries on duals of compact groups}
First, we collect some terminologies around the dual of compact groups, which are also used in Proposition \ref{prp:BC} implicitly. We refer to \cite{MR3204665} for more information.
Let $G$ be a compact group. Then, the group $\Cst$-algebra $C^*_\lambda(G)$ together with the nondegenerate $*$-homomorphism
\[\dD: C^*_\lambda(G) \to M(C^*_\lambda(G) \otimes C^*_\lambda(G))\]
determined by $\dD(\lambda_g) = \lambda _g \otimes \lambda _g$ for $g \in G$ is called the dual discrete quantum group $\dG$ of $G$ (Definition 1.6.7 of \cite{MR3204665}). 

A \emph{representation} of $\dG$ (or a corepresentation of $G$) on a $\Cst$-algebra $A$ is a unitary $U \in M(C^*_\lambda(G) \otimes A)$ such that
\[(\dD \otimes \id)(U) = U_{13} U_{23}.\]
The \emph{regular representation} is a representation $W \in M(C^*_\lambda(G) \otimes C(G))$ of $\dG$ on $C(G)$ given by $W(g) :=\lambda_g^*$ regarding $M(C^*_\lambda(G) \otimes C(G))$ as the $\Cst$-algebra of $M(C^*_\lambda(G))$-valued strictly continuous functions on $G$.

There is a natural one-to-one correspondence between representations of $\dG$ on $A$ and unital $\ast$-homomorphisms $C(G) \to M(A)$ as following:
\begin{itemize}
\item For a representation $U$ of $\dG$ on $A$, the representation $\pi_U(f) := (f \otimes \id)(U) \in M(A)$
 of  $B(G) = C^*_\lambda(G)^* \subset C(G)$ extends to a $\ast$-homomorphism of $C(G)$.
\item For a unital $\ast$-homomorphism $\pi$ of $C(G)$ on $M(A)$, then
$U_\pi := (\id \otimes \pi)(W)$
is a representation of $\dG$.
\end{itemize}

A (left) \emph{action} of $\dG$ on a $\Cst$-algebra $A$ is a nondegenerate $\ast$-homomorphism $\alpha : A \to  M(C^*_\lambda(G) \otimes A)$ such that
\[
(\id \otimes \alpha)\alpha = (\dD \otimes \id) \alpha .
\]
For example, for any $G$-$\Cst$-algebra $A$, the crossed product $G \ltimes A$ is a $\dG$-$\Cst$-algebra by the $*$-homomorphism $\dalpha: G \ltimes A \to M(C^*_\lambda(G) \otimes (G \ltimes A))$
uniquely determined by $\dalpha(a) = a$ and $\dalpha(u_g) = u_g \otimes u_g$ for $a \in A$, $g \in G$.

For an action $\alpha$ of $\dG$ on $A$, the reduced \emph{crossed product} is defined by 
\[
\dG {\, }_{\alpha}\! \ltimes A := \overline{\mathrm{span}} (C(G) \otimes 1) \alpha(A) \subset \Lop(L^2(G) \otimes A).
\]
We often omit $\alpha$ when no confusion arise. We also identify $A$ and $C(G)$ as subalgebras of $\dG \ltimes A$.
It is equipped with the dual $G$-action uniquely determined by $\dalpha _g(a)= a$ for $a \in A$ and $\dalpha _g(f)(x)= f(g^{-1} x) \in C(G)$ for $f \in C(G)$. The Baaj-Skandalis duality \cite{MR1235438} asserts that for any $\dG$-$\Cst$-algebra $A$, $G {\, }_{\dalpha}\! \ltimes (\dG {\, }_{\alpha}\! \ltimes  A)$ is isomorphic to $\Kop(L^2(G)) \otimes A$ with the $\dG$-action
\[\tilde \alpha(a) = W_{12}^* (\id \otimes \alpha)(a)_{213} W_{12} \]
 as $G$-$\Cst$-algebras.

We use these terminologies in order to generalize a characterization of continuous Rokhlin property in terms of the dual action of $\dG$ on $G \ltimes A$ as in Lemma 3.8 of \cite{MR2053753} for general compact groups.

\begin{defn}
We say that an action $\alpha$ of $\dG$ on $A$ is \emph{asymptotically representable} if there is a strictly continuous path of (asymptotic) unitaries $u_t \in M(C^*_\lambda(G) \otimes M(A)^\alpha)$ which satisfies 
\begin{align*}
(\dD \otimes \id)(u_t) - (u_t)_{13} (u_t)_{23} \to &0,\\
\alpha(a) - u_t^* (1 \otimes a) u_t \to &0
\end{align*}
for any $a \in A$ strictly as $t \to \infty$.
\end{defn}

\begin{lem}\label{lem:asymprep}
Let $G$ be a compact group.
\begin{enumerate}
\item Let $(A,\alpha)$ be a $G$-$\Cst$-algebra. Then $\alpha$ has continuous Rokhlin property if and only if the dual action $\hat \alpha$ of $\hat{G}$ on $G \ltimes A$ is asymptotically representable.
\item Let $(A,\alpha)$ be a $\dG$-$\Cst$-algebra. Then $\alpha$ is asymptotically representable if and only if the dual action $\hat \alpha$ of $G$ on $\dG \ltimes A$ has continuous Rokhlin property.
\end{enumerate}
\end{lem}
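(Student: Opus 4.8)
The plan is to prove part (1) directly, using the bijection between representations of $\dG$ and unital $\ast$-homomorphisms out of $C(G)$ recalled above, and then to deduce part (2) from part (1) by Baaj--Skandalis duality. This mirrors the finite group case (Lemma 3.8 of \cite{MR2053753}); the new point is to match the asymptotic/continuous data on both sides.

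For part (1), set $B := G \ltimes A$ with its dual $\dG$-action $\hat\alpha$. The fixed point algebra of $\hat\alpha$ is exactly $M(A)$, so asymptotic representability of $\hat\alpha$ amounts to a strictly continuous path of asymptotic unitaries $u_t \in M(C^*_\lambda(G) \otimes A)$ (recall $A$ is unital). Given such $u_t$, I set $\varphi_t(f) := (f \otimes \id)(u_t) \in A$ and claim the path $(\varphi_t)$ is a continuous Rokhlin map for $(A,\alpha)$. The dictionary is as follows. The representation condition $(\dD \otimes \id)(u_t) - (u_t)_{13}(u_t)_{23} \to 0$ is equivalent to $(\varphi_t)$ being asymptotically multiplicative and unital, by exactly the computation showing $\pi_U$ is multiplicative in the genuine case. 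The implementing condition applied to $a \in A \subset B$, namely $\hat\alpha(a) = 1 \otimes a \approx u_t^*(1 \otimes a)u_t$, says that $u_t$ asymptotically commutes with $1 \otimes a$; applying $(f \otimes \id)$ then gives $[\varphi_t(f), a] \to 0$, so that $(\varphi_t)$ lands in the central path algebra $\mathfrak{C}_\alpha A$.

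The one genuinely contentful point is $G$-equivariance, which comes from the implementing condition on the canonical unitaries $u_g \in B$, where $u_g a u_g^* = \alpha_g(a)$ and $\hat\alpha(u_g) = \lambda_g \otimes u_g$. Since the second leg of $u_t$ lies in $A$, conjugation by $1 \otimes u_g$ preserves $M(C^*_\lambda(G)\otimes A)$ and acts there as $\id \otimes \alpha_g$, so after cancelling the factor $1 \otimes u_g$ the condition $u_t^*(1 \otimes u_g)u_t \approx \lambda_g \otimes u_g$ becomes
\[
(\id \otimes \alpha_g)(u_t) \approx u_t (\lambda_g \otimes 1).
\]
Applying $(f \otimes \id)$ yields $\alpha_g(\varphi_t(f)) \approx \varphi_t(g \cdot f)$, where $g \cdot f$ is the translate of $f$ for the action on $C(G)$ appearing in the Rokhlin definition; this is precisely the asymptotic equivariance required. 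For the converse, I lift a continuous Rokhlin map $\varphi \colon C(G) \to \mathfrak{C}_\alpha A$ to a $G$-equivariant completely positive contractive path $\varphi_t \colon C(G) \to A$ (lift by Choi--Effros, using nuclearity of $C(G)$, then average over $G$) and set $u_t := (\id \otimes \varphi_t)(W)$ with $W$ the regular representation; reversing the dictionary, the identity $(\dD \otimes \id)(W) = W_{13}W_{23}$ turns asymptotic multiplicativity into the representation condition, while the centrality and equivariance statements turn into the implementing condition. Throughout, the passage between asymptotic path data and genuine $\ast$-homomorphisms into the quotient $\mathfrak{A}_\alpha A$ is the standard reindexing argument in the path algebra.

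For part (2) I apply part (1) to the $G$-$\Cst$-algebra $(\dG \ltimes A, \hat\alpha)$: its dual action has continuous Rokhlin property if and only if the bidual $\dG$-action on $G \ltimes (\dG \ltimes A)$ is asymptotically representable. By Baaj--Skandalis duality this bidual is $(\Kop(L^2(G)) \otimes A, \tilde\alpha)$ with $\tilde\alpha(a) = W_{12}^*(\id \otimes \alpha)(a)_{213} W_{12}$, so part (2) reduces to the stability statement that $\tilde\alpha$ is asymptotically representable if and only if $\alpha$ is. Since $\tilde\alpha$ is obtained from $\alpha$ (stabilized by $\Kop(L^2(G))$) by a leg flip and conjugation by the \emph{genuine} representation $W_{12}$, an implementing path $u_t$ for $\alpha$ should transport to $\tilde u_t := W_{12}^*(u_t)_{13}$ for $\tilde\alpha$, the representation condition being preserved because $W$ satisfies $(\dD \otimes \id)(W) = W_{13}W_{23}$ exactly. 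I expect this stability lemma, together with the attendant bookkeeping of legs and the strict-topology/multiplier subtleties, to be the main obstacle. It is also the reason to route part (2) through duality rather than argue directly: the naive candidate $u_t^*(f \otimes 1)u_t$ for a Rokhlin map on $\dG \ltimes A$ is asymptotically central against $\alpha(A)$ but fails to commute with the $C(G)$-leg, and duality is exactly what absorbs that leg into $W$.
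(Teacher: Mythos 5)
Your proposal follows the same route as the paper: for part (1) the paper also passes through the representation/$\ast$-homomorphism dictionary in the path algebra (it sets $u_t := (\id \otimes \psi_t)(W \otimes 1_A)$ for one direction, and reads an asymptotic representation as a genuine representation of $\dG$ on $\mathfrak{A}A$, hence a unital $\ast$-homomorphism $C(G) \to \mathfrak{A}A$, for the other), and part (2) is deduced, exactly as you propose, from part (1) together with Baaj--Skandalis duality. Your breakdown of the implementing condition --- elements of $A$ giving asymptotic centrality, the canonical unitaries $u_g$ giving $G$-equivariance --- is precisely what the paper leaves implicit, and the stability lemma you isolate (asymptotic representability of $\alpha$ iff that of $\tilde\alpha$) is indeed the content hidden in the paper's one-line proof of (2).

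However, the explicit transport formula you propose is wrong as written. With the paper's conventions ($\alpha(a) \approx u_t^*(1 \otimes a)u_t$ and $\tilde\alpha(x) = W_{12}^*(\id \otimes \alpha)(x)_{213}W_{12}$), the unitary $W_{12}^*(u_t)_{13}$ does not implement $\tilde\alpha$: for $x = k \otimes a$ one gets
\[
\bigl(W_{12}^*(u_t)_{13}\bigr)^*\, x_{23}\, \bigl(W_{12}^*(u_t)_{13}\bigr) = (u_t)_{13}^*\,\bigl(W_{12} k_2 W_{12}^*\bigr)\, a_3\,(u_t)_{13},
\]
and $W_{12}k_2W_{12}^*$ does not commute with $(u_t)_{13}$ (both occupy leg $1$), so this cannot be rearranged into $\tilde\alpha(k \otimes a) = W_{12}^* k_2\, \alpha(a)_{13}\, W_{12}$. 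The correct transport conjugates in the other order: take $\tilde u_t := (u_t)_{13}W_{12}$. Then
\[
\tilde u_t^*\, x_{23}\, \tilde u_t = W_{12}^*\,(u_t)_{13}^*\, k_2\, a_3\, (u_t)_{13}\, W_{12} \approx W_{12}^*\, k_2\, \alpha(a)_{13}\, W_{12} = \tilde\alpha(k \otimes a),
\]
using that $k_2$ commutes with $(u_t)_{13}$; the asymptotic representation condition for $\tilde u_t$ follows from those of $u_t$ and $W$ because the cross terms (a $W$ with legs $\{0,2\}$ and a $u_t$ with legs $\{1,3\}$) commute; and the non-group legs of $\tilde u_t$ lie in $M(\dG \ltimes A) = M(\Kop(L^2(G)) \otimes A)^{\tilde\alpha}$, since $C(G) \subset \dG \ltimes A$ and $1 \otimes M(A)^\alpha = \alpha\bigl(M(A)^\alpha\bigr) \subset M(\dG \ltimes A)$. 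With this one correction, the rest of your argument goes through and agrees with the paper's.
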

\begin{proof}
For (1), first assume $\alpha$ has continuous Rokhlin property with the averaging map $\psi_t$.
Set
\[u_t := (\id \otimes \psi_t)(W \otimes 1_A) \in M(C^*_\lambda(G) \otimes A).\]
Since $\psi_t$ is $G$-equivariant and asymptotically central in $A$, we obtain $\hat{\alpha} (a) - u_t^* (1 \otimes a) u_t \to 0$. 

Conversely assume $\hat \alpha$ is asymptotically representable with the implementing asymptotic representation $u_t \in M(C^*_\lambda(G) \otimes A)$. Then $u_t$ determines a unitary representation $u \in M(C^*_\lambda(G) \otimes \mathfrak{A}A)$, hence gives rise to a unital $*$-homomorphism
\[\psi: C(G) \to \mathfrak{A}A.\]
Uniform continuity, $G$-equivariance and asymptotic centrality follow from the relation $\alpha(a) - u_t^*(1 \otimes a) u_t \to 0$.

The assertions in (2) follow from (1) and the Baaj-Skandalis duality. 
\end{proof}

\subsection{The model action}
Next, we construct a continuous Rokhlin action of a Hodgkin Lie group $G$ on a Kirchberg algebra which is $\KK^G$-equivalent to $C(G)$, more precisely, an asymptotically representable action of $\dG$ on $\cO_\infty$ whose crossed product is purely infinite. 
Our method is close to that of \cite{mathOA14061208}.

Pick a finite sequence of elements $S := (g_1, g_2, \ldots g_n)$ in $G$. Then this induces a finite dimensional representation
\[\pi_S : C(G) \to \mathbb{M}_n, \ \pi_S(f) = {\rm diag}(f(g_i))\]
and the corresponding representation $U_S := {\rm diag}(\lambda_{g_i}) \in M(C^*_\lambda(G) \otimes \mathbb{M}_n)$ of $\dG$.
Observe that
\[U_S \otimes U_T := (U_T)_{13} (U_S)_{12} \simeq U_{S \otimes T}\]
where $S \otimes T := \{hg \mid g \in S, h \in T\}$.
Hereafter, we fix symmetric $S = (g_1,g_2, \ldots, g_n)$ which topologically generates $G$ and contains $1$.

We prepare the following elementary lemma which plays an essential role in the proof of purely infiniteness of the crossed product.
\begin{lem}\label{Lemma}
For any $\varepsilon > 0$ and any finite subset $1 \not \in F \subset \Irr (G)$, there exists $k \in {\mathbb Z}_+$ and a nonzero projection $p \in \mathbb{M}_n^{\otimes k}$ such that
\[\| (\pi \otimes \id )((1_{C^*_\lambda(G)} \otimes p)(U_S^{\otimes k})^* (1_{C^*_\lambda(G)} \otimes p)U_S^{\otimes k}) \| _{\Bop (\Hilb _\pi) \otimes \mathbb{M}_n^{\otimes k}} < \varepsilon\]
for any $\pi \in F$.
\end{lem}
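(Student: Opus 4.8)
The plan is to reduce the statement to a strict spectral bound for an averaging operator and then to take $p$ to be a rank-one projection onto a single ``uniform'' vector.

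First I would unwind the notation. Set $W_{\pi,k} := (\pi \otimes \id)(U_S^{\otimes k}) \in \Bop(\Hilb_\pi) \otimes \mathbb{M}_n^{\otimes k}$; this is a unitary, and since $\pi \otimes \id$ is a $\ast$-homomorphism the operator whose norm we must estimate equals $(1 \otimes p) W_{\pi,k}^* (1 \otimes p) W_{\pi,k}$ (here $1 = 1_{\Bop(\Hilb_\pi)}$). Because $W_{\pi,k}$ is unitary, $\ssbk{(1 \otimes p) W_{\pi,k}^* (1 \otimes p) W_{\pi,k}} = \ssbk{(1 \otimes p) W_{\pi,k}^* (1 \otimes p)} = \ssbk{(1 \otimes p) W_{\pi,k} (1 \otimes p)}$, so it is enough to make the compression $(1 \otimes p) W_{\pi,k} (1 \otimes p)$ small in norm simultaneously for all $\pi \in F$. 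Writing multi-indices $\vec\imath = (i_1, \dots, i_k)$ and the diagonal matrix units $e_{\vec\imath \vec\imath} = e_{i_1 i_1} \otimes \cdots \otimes e_{i_k i_k}$, the relation $U_S \otimes U_T \simeq U_{S\otimes T}$ gives $W_{\pi,k} = \sum_{\vec\imath} \pi(g_{i_1} \cdots g_{i_k}) \otimes e_{\vec\imath \vec\imath}$.

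Next I would introduce the self-adjoint averaging operator $T_\pi := \tfrac{1}{n} \sum_{i=1}^n \pi(g_i) \in \Bop(\Hilb_\pi)$, which is self-adjoint precisely because $S$ is symmetric and satisfies $\ssbk{T_\pi} \leq 1$ as an average of unitaries. The decisive point is the strict inequality $\ssbk{T_\pi} < 1$ for every nontrivial $\pi \in \Irr(G)$. Since $T_\pi$ is self-adjoint with $\ssbk{T_\pi} \leq 1$, an equality $\ssbk{T_\pi} = 1$ would force a unit eigenvector for eigenvalue $+1$ or $-1$. The value $-1$ is excluded because $1 \in S$ gives $T_\pi \geq \tfrac{2-n}{n}\cdot 1 > -1$. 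An eigenvector $\xi$ for $+1$ satisfies $\tfrac{1}{n}\sum_i \pi(g_i)\xi = \xi$, and equality in the triangle inequality in a Hilbert space forces $\pi(g_i)\xi = \xi$ for all $i$; since $S$ topologically generates $G$, this makes $\xi$ a nonzero $G$-invariant vector, contradicting irreducibility of the nontrivial $\pi$. As $F$ is finite, $\rho := \max_{\pi \in F} \ssbk{T_\pi} < 1$.

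Finally I would choose $k$ with $\rho^k < \varepsilon$, take the unit vector $\eta := \bk{n^{-1/2}\sum_{i} e_i}^{\otimes k} \in (\comp^n)^{\otimes k}$, and let $p$ be the rank-one projection onto $\comp\eta$, which is a nonzero projection in $\mathbb{M}_n^{\otimes k}$. A short computation then shows $(1 \otimes p) W_{\pi,k} (1 \otimes p) = B_\pi \otimes p$ with $B_\pi = (1 \otimes \langle \eta|) W_{\pi,k} (1 \otimes |\eta\rangle) \in \Bop(\Hilb_\pi)$; because $\eta$ is uniform, $\sbk{\langle e_{\vec\imath}, \eta\rangle}^2 = n^{-k}$ for every $\vec\imath$, whence $B_\pi = n^{-k}\sum_{\vec\imath} \pi(g_{i_1}\cdots g_{i_k}) = \bk{\tfrac{1}{n}\sum_i \pi(g_i)}^k = T_\pi^k$. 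Therefore the quantity in the statement equals $\ssbk{T_\pi^k} = \ssbk{T_\pi}^k \leq \rho^k < \varepsilon$ for all $\pi \in F$, as required. I expect the main obstacle to be exactly the uniform strict bound $\ssbk{T_\pi} < 1$: this is the one nontrivial step, and it is where all three hypotheses on $S$—symmetry, containment of $1$, and topological generation of $G$—are genuinely used.
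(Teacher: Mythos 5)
Your proof is correct, but it establishes the key estimate by a genuinely different argument than the paper's. The common skeleton is the same: both proofs take $p$ to be a rank-one projection onto a unit vector $\xi \in \ell^2(S^{\otimes k})$, which reduces the norm in question to that of the $\Bop(\Hilb_\pi)$-valued weighted average $\sum_{\vec\imath} |\xi_{\vec\imath}|^2\, \pi(g_{i_1}\cdots g_{i_k})$. The paper then produces a good $\xi$ \emph{softly}: since $\bigcup_k S^{\otimes k}$ is dense in $G$, finitely supported probability measures on these word sets approximate Haar measure in the weak-$\ast$ topology, and $\int_G \pi(g)\, dg = 0$ for every nontrivial $\pi \in \Irr(G)$; this is an existence (net) argument, and the resulting vector is not explicit. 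You instead fix the uniform vector, so the average becomes exactly $T_\pi^k$ with $T_\pi = \frac{1}{n}\sum_i \pi(g_i)$, and you prove a spectral gap $\ssbk{T_\pi} < 1$, using symmetry of $S$ for self-adjointness, $1 \in S$ to exclude the eigenvalue $-1$, and topological generation plus irreducibility to exclude $+1$; in effect you reprove the equidistribution of convolution powers of the uniform measure on $S$, which is the fact the paper quotes implicitly through density. What your route buys: an explicit projection and a quantitative geometric rate (any $k$ with $\rho^k < \varepsilon$, $\rho = \max_{\pi \in F}\ssbk{T_\pi}$), with no appeal to weak-$\ast$ compactness. What the paper's route buys: brevity and softness --- only density of words and Schur orthogonality are needed, with no eigenvalue analysis. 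One point worth spelling out in your write-up: the bound $T_\pi \geq \frac{2-n}{n}$ is justified because $nT_\pi - 1 = \sum_{g_i \neq 1} \pi(g_i)$ is self-adjoint (being $nT_\pi - 1$) of norm at most $n-1$, hence bounded below by $-(n-1)$; as currently phrased it could be misread as invoking an operator inequality for an arbitrary sum of unitaries.
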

\begin{proof}
Since $\displaystyle \bigcup_k S^{\otimes k}$ is dense in $G$, we have a net of vectors $\xi_i \in \ell^2(S^{\otimes {k_i}})$ such that 
\[(\pi_S(f) \xi_i, \xi_i) \to \int_G f(g) dg\]
for any $f \in C(G)$.
Hence there exists $k \in \zahl _{>0}$ and a unit vector $\xi \in \ell^2(S^{\otimes k})$ such that
\[\| \langle (\id_{\Bop (\Hilb _\pi)} \otimes \pi_S^{\otimes k})(U_\pi)(1_{\Bop (\Hilb _\pi)} \otimes \xi),  1_{\Bop (\Hilb _\pi)} \otimes \xi \rangle \| < \varepsilon\]
for any $\pi \in F$, where $U_\pi \in \Bop(\Hilb_\pi)\otimes C(G)$ is the unitary representation of $G$ corresponding to $\pi$.
Now we take the projection $p \in \mathbb{M}_n^{\otimes k}$ onto $\mathbb C \xi$.
Using
\[(\id _{\Bop (\Hilb _\pi )} \otimes \pi_S^{\otimes k})(U_\pi) = (\pi \otimes \id _{\mathbb{M}_n^{\otimes k}})(U_S^{\otimes k}),\]
we get $p$ satisfies the desired property as follows:
\begin{align*}
& \| (\pi \otimes \id )((1 \otimes p)(U_S^{\otimes k})^* (1 \otimes p)U_S^{\otimes k}) \|_{\Bop (\Hilb _\pi) \otimes \mathbb{M}_n^{\otimes k}} \\
= & \| \langle (\pi \otimes \id)(U_S^{\otimes k})(1 \otimes \xi), 1 \otimes \xi \rangle ^* (1 \otimes p) (\pi \otimes \id)(U_S^{\otimes k})  \|_{\Bop (\Hilb _\pi) \otimes \mathbb{M}_n^{\otimes k}} <  \varepsilon.
\end{align*}
\end{proof}
Consider a unital embedding
\[\mathbb{M}_n \oplus \mathbb{M}_{n+1} \to \cO_\infty\]
and denote the canonical image of $U_S \oplus 1$ by $V$.
This is a unitary representation of $\dG$ on $\cO_\infty$.
Let $\displaystyle A := \bigotimes \nolimits_{k \in {\mathbb Z}_+} \cO_\infty \simeq \cO_\infty$ and consider the following action $\alpha$ of $\dG$ on $A$ given by
\[\alpha(x) = \lim_{k \to \infty} (V^{\otimes k})^* (1 \otimes x) V^{\otimes k}.\]
Here the limit makes sense because the sequence is eventually constant for any $x \in (\cO_\infty)^{\otimes l}$. Namely, for any $k \geq l$ we have
\[(V^{\otimes k})^* (1 \otimes x) V^{\otimes k} = (V^{\otimes l})^* (1 \otimes x) V^{\otimes l}.\]

\begin{thm}\label{thm:model}
Let $G$ be a Hodgkin Lie group. The crossed product $\cO(G):=\dG _\alpha \ltimes A$ with the dual action of $G$ is a Kirchberg $G$-algebra with continuous Rokhlin property which is $\KK^G$-equivalent to $C(G)$ by the canonical unital inclusion $C(G) = C^*(\dG) \to \dG {\, }_{\alpha}\! \ltimes A$.
\end{thm}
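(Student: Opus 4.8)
The statement bundles four assertions: that $\cO(G)=\dG\ltimes A$ is (i) a separable nuclear unital $\Cst$-algebra carrying the dual $G$-action, (ii) simple and purely infinite, (iii) of continuous Rokhlin property, and (iv) $\KK^G$-equivalent to $C(G)$ through the canonical inclusion. Separability, unitality and the presence of the dual $G$-action are immediate from the construction, and nuclearity follows because $A\cong\cO_\infty$ is nuclear and $\dG$ is the (coamenable) dual of a compact group, so forming $\dG\ltimes-$ preserves nuclearity. The substantive points are (ii), (iii) and (iv), which I would treat separately.

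For the continuous Rokhlin property I would invoke Lemma \ref{lem:asymprep}(2): it suffices to show that the $\dG$-action $\alpha$ on $A$ is asymptotically representable. The natural candidates for the implementing unitaries are the finite tensor powers $V^{\otimes k}$, reindexed to a strictly continuous path $u_t$. The representation identity $(\dD\otimes\id)(u_t)-(u_t)_{13}(u_t)_{23}\to 0$ holds exactly for each $V^{\otimes k}$, since $V$ is a genuine representation of $\dG$ and a tensor product of representations is again one; the implementation relation $\alpha(a)-u_t^*(1\otimes a)u_t\to 0$ is precisely the defining formula for $\alpha$, which is eventually constant on each finitely supported $a$. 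The one delicate requirement is that the $u_t$ lie asymptotically in $M(C^*_\lambda(G)\otimes M(A)^\alpha)$; this I would arrange using the self-similar infinite-tensor structure of $(A,\alpha)$, placing the tensor legs carrying the $V$'s far out along the tail $\bigotimes_{j>N}\cO_\infty$, where $\alpha$ is asymptotically trivial, so that the implementing unitaries become asymptotically $\alpha$-fixed while still implementing $\alpha$ on any fixed finitely supported element as $N\to\infty$.

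Simplicity and pure infiniteness form the core of the proof and the main obstacle. Here I would run Kishimoto's argument \cite{MR634163}. Let $E\colon\dG\ltimes A\to A$ be the canonical conditional expectation (integration against the Haar state of $\dG$), which is faithful, so $E(b)\neq0$ for every nonzero positive $b$. Given such a $b$ and $\varepsilon>0$, expand $b$ over the spectral subspaces indexed by $\Irr(G)$; only finitely many nontrivial $\pi\in F\subset\Irr(G)$ contribute up to $\varepsilon$. Lemma \ref{Lemma} then produces a power $k$ and a nonzero projection $p\in\mathbb{M}_n^{\otimes k}$ for which compression by $p$ through $U_S^{\otimes k}$ (equivalently, conjugation inside $\cO(G)$ built from $p$ and the powers of $V$) makes the contributions of all $\pi\in F$ uniformly small. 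Consequently there is a contraction $v\in\dG\ltimes A$ with $v^*bv$ within $\varepsilon$ of a nonzero positive element of $A$, i.e.\ the averaging kills the nontrivial representation directions and lands us back in $A$. Because $A\cong\cO_\infty$ is purely infinite and simple, this element can be further compressed to within $\varepsilon$ of a projection and then to the unit; tracking the estimates shows that every nonzero positive $b$ is full and properly infinite, giving simultaneously simplicity and pure infiniteness of $\cO(G)$. The main work is in making the two approximation steps (the Kishimoto excision via Lemma \ref{Lemma}, and the passage from $E(b)$ to a projection in $\cO_\infty$) compatible and controlling the error terms in the crossed-product norm.

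Finally, for the $\KK^G$-equivalence I would use Proposition \ref{prp:BC}: since $G$ is Hodgkin, the canonical inclusion $j\colon C(G)=C^*(\dG)\to\dG\ltimes A$ is a $\KK^G$-equivalence if and only if $G\ltimes j$ is a $\KK$-equivalence. By Baaj--Skandalis duality one has $G\ltimes(\dG\ltimes A)\cong\Kop(L^2(G))\otimes A$, while the dual $G$-action restricts to left translation on $C(G)$, so $G\ltimes C(G)\cong\Kop(L^2(G))$. Under these identifications, and by naturality of the duality, $G\ltimes j$ becomes the stabilized unital inclusion $\id_{\Kop(L^2(G))}\otimes(\comp\hookrightarrow\cO_\infty)$. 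Since $\cO_\infty$ satisfies the UCT with $\K_0(\cO_\infty)=\zahl\cdot[1]$ and $\K_1(\cO_\infty)=0$, the unital inclusion $\comp\to\cO_\infty$ is a $\KK$-equivalence \cite{MR894590}; stabilization preserves this, so $G\ltimes j$ is a $\KK$-equivalence and hence $j$ is a $\KK^G$-equivalence.
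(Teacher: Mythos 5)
Your treatment of pure infiniteness (Kishimoto's argument run through Lemma \ref{Lemma}, approximating a positive element by a finitely supported one and killing the nontrivial spectral components) and of the $\KK^G$-equivalence (Proposition \ref{prp:BC} plus Baaj--Skandalis duality, reducing everything to the fact that the unital inclusion $\comp \to \cO_\infty$ is a $\KK$-equivalence) both follow the paper's route and are sound. The genuine gap is in the continuous Rokhlin part, and it sits exactly at the point you flag as ``delicate'' --- but the delicacy is not where you locate it, and the fix you propose does not work.

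First, the discrete sequence $V^{\otimes k}$ cannot simply be ``reindexed'' into a strictly continuous path: one must interpolate between $V^{\otimes k}$ and $V^{\otimes(k+1)}$ through unitaries that still satisfy the corepresentation identity (at least asymptotically), and any naive interpolation destroys precisely that identity. This is where the paper uses the Hodgkin hypothesis: since $G$ is connected, each $g_i$ can be joined to $e$ by a path, giving a homotopy of \emph{diagonal unitary representations} $U_{S(s)} = \mathrm{diag}(\lambda_{g_i(s)})$ connecting $U_S$ to $1$; inserting this homotopy into the $(k+1)$-st tensor leg yields a strictly continuous path from $V^{\otimes k}$ to $V^{\otimes(k+1)}$ consisting of genuine representations of $\dG$. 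Without connectedness one only obtains the sequence version, i.e.\ ordinary Rokhlin property --- the continuity is exactly the content at stake, and your proposal never invokes connectedness at all. Second, your resolution of the fixed-point requirement --- placing the $V$'s far out on the tail $\bigotimes_{j>N}\cO_\infty$ --- fails: a unitary whose $A$-legs lie in the tail commutes with $1 \otimes x$ for every $x$ supported in the first $N$ tensor factors, so conjugation by it acts as the \emph{identity} on such $x$, not as $\alpha(x)$; moreover $\alpha$ is not asymptotically trivial on the tail (it has the same infinite-tensor form there). In fact no trick is needed: because $U_S$ is diagonal, the $A$-legs of $V^{\otimes k}$, and of the interpolating path above, are the mutually commuting diagonal projections $e_{ii} \in \mathbb{M}_n \subset \cO_\infty$, each of which is fixed by $\alpha$ (since $1 \otimes e_{ii}$ commutes with $V$); hence $V^{\otimes k}$ and the whole interpolating path already lie in $M(C^*_\lambda(G) \otimes M(A)^\alpha)$ exactly. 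The correct argument therefore keeps the unitaries on the initial legs, gets invariance for free from the diagonal form, and uses connectedness of $G$ for continuity --- this last ingredient being the one missing from your proposal.
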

Asymptotic representability of $\alpha$, which implies continuous Rokhlin property of $\hat{\alpha}$, follows from connectedness of $G$ since we have a homotopy of diagonal unitary representations $U_t$ connecting $U_S$ and $1$. 
The desired $\KK^G$-equivalence follows from Proposition \ref{prp:BC} and the Baaj-Skandalis duality.
Separability and nuclearity of $\cO(G)$ also follow by definition because $G$ is second countable and coamenable. 
Hence the rest part of the proof is to show that $\cO(G)$ is purely infinite. 
For this, we follow the argument appeared in \citelist{\cite{MR634163}\cite{MR1424962}}.
\begin{lem}
For any $\varepsilon > 0$, finite subset $X \subset A$ and $1 \not \in F \subset {\rm Irr}(G)$,
we have $x \in A_+$ such that for any $z \in X$
\begin{align*}
&\| xzx \| \geq (1 - \varepsilon) \|z\|, \\
&\| (1 \otimes xz)\alpha_\pi(x) \| < \varepsilon \| z \|.
\end{align*}
\end{lem}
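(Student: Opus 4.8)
The plan is to build $x$ as a projection of the form $x = 1_{\cO_\infty^{\otimes l}}\otimes p$, where $p\in\mathbb{M}_n^{\otimes k}$ is the projection furnished by Lemma \ref{Lemma}, placed in the tensor factors $l+1,\dots,l+k$ of $A$. First I would reduce to the case where every $z\in X$ lies in the finite stage $A_l := \cO_\infty^{\otimes l}$: choosing $l$ large and replacing each $z$ by a nearby $z'\in A_l$ perturbs both displayed quantities by a controllable amount, so it suffices to treat $z\in A_l$ and absorb the error into $\varepsilon$. For such $z$ the first inequality is immediate and in fact sharp: since $p$ lives in factors $>l$, it commutes with $z$, and being a nonzero projection it gives $xzx = z\otimes p$ of norm exactly $\|z\|$.

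The substance is the second inequality, and here I would unwind $\alpha_\pi(x)$. Write $V^{\otimes(l+k)} = V_{[l+1,l+k]}V_{[1,l]}$, splitting the representation into the factors carrying $z$ and those carrying $p$. Since $1\otimes x$ is supported on the legs $l+1,\dots,l+k$, conjugation by $V_{[l+1,l+k]}$ produces $Y := (V^{\otimes k})^*(1\otimes p)V^{\otimes k}$ on those legs, and applying $(\pi\otimes\id)$ gives $\alpha_\pi(x) = W^* Q W$, where $W := (\pi\otimes\id)(V_{[1,l]})$ is a unitary on $\Hilb_\pi\otimes A_l$ and $Q := (\pi\otimes\id)(Y)$ lives on $\Hilb_\pi$ together with the last $k$ legs. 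Now $1\otimes xz = (1\otimes z)(1\otimes p)$ with $1\otimes p$ supported on the last $k$ legs, hence commuting with $W$; factoring out $\|1\otimes z\| = \|z\|$, commuting $1\otimes p$ past $W^*$, and using unitarity of $W$ collapses the estimate to
\[
\|(1\otimes xz)\alpha_\pi(x)\| \le \|z\|\cdot\|(\pi\otimes\id)\big((1\otimes p)(V^{\otimes k})^*(1\otimes p)V^{\otimes k}\big)\|.
\]

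The final step is to recognize the right-hand norm as exactly the quantity bounded in Lemma \ref{Lemma}. Let $e\in\cO_\infty$ be the image of $1_{\mathbb{M}_n}$ under $\mathbb{M}_n\oplus\mathbb{M}_{n+1}\to\cO_\infty$. Because $V = U_S\oplus 1$ is block diagonal for $e\oplus(1-e)$, the multiplier $V^{\otimes k}$ commutes with $1\otimes e^{\otimes k}$ and satisfies $V^{\otimes k}(1\otimes e^{\otimes k}) = U_S^{\otimes k}$ under $\mathbb{M}_n^{\otimes k}\cong e^{\otimes k}\cO_\infty^{\otimes k}e^{\otimes k}$. Since $p\le e^{\otimes k}$, this yields $(1\otimes p)V^{\otimes k} = (1\otimes p)U_S^{\otimes k}$ and $V^{\otimes k}(1\otimes p) = U_S^{\otimes k}(1\otimes p)$, whence
\[
(1\otimes p)(V^{\otimes k})^*(1\otimes p)V^{\otimes k} = (1\otimes p)(U_S^{\otimes k})^*(1\otimes p)U_S^{\otimes k}.
\]
Applying Lemma \ref{Lemma} with $F$ and a sufficiently small $\varepsilon'$ makes this $<\varepsilon'$ for all $\pi\in F$, and choosing $\varepsilon'$ (together with the approximation error above) small enough gives the required strict bound $<\varepsilon\|z\|$. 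The main obstacle is purely organizational: keeping the tensor legs straight through the splitting of $V^{\otimes(l+k)}$ and verifying the block-diagonal identity that lets $V$ be replaced by $U_S$; once that reduction to Lemma \ref{Lemma} is in place the estimate is automatic.
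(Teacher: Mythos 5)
Your proposal is correct and follows essentially the same route as the paper: reduce to $z$ in a finite tensor stage, take $x = 1 \otimes q$ where $q$ is the image in $\cO_\infty^{\otimes k}$ of the projection $p$ from Lemma \ref{Lemma}, and use the commutation of $1 \otimes x$ with the legs of $V$ carrying $z$ together with unitarity to reduce the second estimate to the bound of Lemma \ref{Lemma}. The only difference is that you spell out two points the paper leaves implicit — the density/perturbation argument and the block-diagonal identity letting $V^{\otimes k}$ be replaced by $U_S^{\otimes k}$ on the range of $p$ — both of which you verify correctly.
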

\begin{proof}
We may assume $X \subset (\cO_\infty)^{\otimes k}$. Pick $p \in (\mathbb{M}_n)^{\otimes l}$ as in Lemma \ref{Lemma}. Consider the inclusion
\[(\mathbb{M}_n)^{\otimes l} \subset (\mathbb{M}_n \oplus \mathbb{M}_{n+1})^{\otimes l} \subset (\cO_\infty)^{\otimes l}\]
and let us denote the image of $p$ by $q$. Then $q$ is a nonzero projection such that
\[\| (1_{\Bop (\Hilb _\pi)} \otimes q) \alpha_\pi(q) \| < \varepsilon\]
for any $\pi \in F$.
Now,
\[x := 1_{\cO_\infty^{\otimes k}} \otimes q \in (\cO_\infty)^{\otimes (k+l)}\]
satisfies the property. In fact, since $x$ commutes with any $z \in X$, the first condition is obvious and the second one follows from
\[(1 \otimes xz)\alpha_\pi(x) = (1 \otimes z) (V^{\otimes k})^* ((1 \otimes q)\alpha_\pi(q))_{1(k+2)(k+3)\ldots(k+l+1)} V^{\otimes k}.\]
\end{proof}
\begin{cor}
The $\Cst$-algebra $\dG {\, }_{\alpha}\! \ltimes A$ is purely infinite.
\end{cor}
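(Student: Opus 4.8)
The plan is to prove the stronger statement that $B := \dG {\, }_{\alpha}\! \ltimes A$ is purely infinite \emph{and} simple, using the following criterion: since $B$ is unital (with unit $\alpha(1_A)=1$), it suffices to show that $1_B \precsim b$ in the sense of Cuntz subequivalence for every nonzero $b \in B_+$. Indeed, $1_B \precsim b$ forces the ideal generated by any nonzero element to be all of $B$ (hence $B$ is simple), and it gives $a \le \|a\| 1_B \precsim 1_B \precsim b$ for all nonzero $a,b \in B_+$ (hence $B$ is purely infinite). Let $E : B \to A$ be the canonical faithful conditional expectation onto $A = \alpha(A)$, which in the Peter--Weyl picture projects $B = \overline{\mathrm{span}}(C(G)\otimes 1)\alpha(A)$ onto its component indexed by the trivial representation $1 \in \Irr(G)$. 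First I would fix a nonzero $b \in B_+$ and approximate it within $\delta_1$ by a finite sum $b' = \sum_{\pi \in F'} b'_\pi$ of homogeneous components, with finite spectral support $F' \subset \Irr(G)$ and all coefficients lying in a finite tensor part $(\cO_\infty)^{\otimes k} \subset A$. Setting $z := E(b') = b'_1 \in (\cO_\infty)^{\otimes k}_+$ and taking $\delta_1 < \|E(b)\|$, faithfulness of $E$ together with $b \neq 0$ guarantees $z \neq 0$.

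Next I would apply the preceding Lemma to the finite set $X$ of coefficients of $b'$, to $F := F' \setminus \{1\}$, and to a small $\varepsilon > 0$, producing the projection $x = 1_{(\cO_\infty)^{\otimes k}} \otimes q \in A_+$. Because $x$ is the identity on the first $k$ tensor factors, it commutes with $z$ and with every coefficient of $b'$, so $\alpha(x)\alpha(z)\alpha(x) = \alpha(xzx) = \alpha(zx)$, while $\|zx\| \geq (1-\varepsilon)\|z\|$ by the first estimate of the Lemma. Since $E$ is an $A$-bimodule map and $A$-compression preserves the spectral grading, the trivial component of $\alpha(x) b' \alpha(x)$ is exactly $\alpha(zx)$, whence
\[
\|\alpha(x)\,b\,\alpha(x) - \alpha(zx)\| \;\le\; \delta_1 + \sum_{\pi \in F} \|\alpha(x)\,b'_\pi\,\alpha(x)\|.
\]
The heart of the argument is bounding each off-diagonal term, and this is exactly where I expect the main obstacle to lie. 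Following Kishimoto~\cite{MR634163}, I would use the covariance relation of the $\dG$-crossed product together with the definition of the spectral component $\alpha_\pi$: commuting $\alpha(x)$ past the matrix-coefficient part of $b'_\pi$ produces the operator $\alpha_\pi(x)$, and a direct computation shows that $\|\alpha(x) b'_\pi \alpha(x)\|$ is controlled by $\|(1\otimes x a_\pi)\alpha_\pi(x)\|$ for the relevant coefficient $a_\pi$ of $b'_\pi$. This is precisely the quantity estimated by the second inequality of the preceding Lemma, so choosing $\varepsilon$ small enough makes $\eta := \|\alpha(x) b \alpha(x) - \alpha(zx)\|$ as small as we wish; in particular I would arrange $\eta < (1-\varepsilon)\|z\| \le \|\alpha(zx)\|$.

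Finally I would conclude by Cuntz comparison inside $A \cong \cO_\infty$. As $\alpha(zx)$ is a nonzero positive element of the purely infinite simple algebra $A$ and $\eta < \|\alpha(zx)\|$, the element $(\alpha(zx)-\eta)_+ \in A_+$ is nonzero, so $1_B = 1_A \precsim (\alpha(zx)-\eta)_+$ already within $A \subseteq B$. R\o rdam's perturbation lemma, applied to $\|\alpha(x) b \alpha(x) - \alpha(zx)\| < \eta$, gives $(\alpha(zx)-\eta)_+ \precsim \alpha(x)\, b\, \alpha(x)$, while the compression $\alpha(x) b \alpha(x)$ is always Cuntz-dominated by $b$. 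Chaining these subequivalences yields $1_B \precsim b$ for every nonzero $b \in B_+$, which proves that $B$ is purely infinite and simple, establishing the corollary and completing the pure-infiniteness part of Theorem~\ref{thm:model}. The only genuinely delicate point is the off-diagonal estimate of the second paragraph, namely the identification of the crossed-product compression $\alpha(x) b'_\pi \alpha(x)$ with the quantity furnished by the preceding Lemma; everything else is bookkeeping with $E$ and standard facts about Cuntz subequivalence.
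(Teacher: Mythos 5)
Your proposal is correct and follows the paper's own proof essentially step for step: approximate $b$ by a positive element of finite spectral support, apply the preceding Lemma to its coefficients so that compression by $x$ kills the nontrivial spectral components (via the estimate on $\|(1\otimes x c^\pi_{ij})\alpha_\pi(x)\|$) while keeping the trivial component large, and then use pure infiniteness and simplicity of $A \cong \cO_\infty$ to get $1 \precsim b$. The only cosmetic difference is that you spell out the final comparison step with R\o rdam's perturbation lemma and a Cuntz-subequivalence chain, whereas the paper delegates exactly this bookkeeping to Lemma 10 of Kishimoto--Kumjian; the off-diagonal identification you flag as the delicate point is precisely what the paper's Lemma was engineered to provide.
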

\begin{proof}
For any positive $a \in G {\, }_{\alpha}\! \ltimes A $, we approximate $a$ by positive
\[c = \sum_{\pi,i,j} c^\pi_{ij} u^\pi_{ij} \in \dG {\, }_{\alpha, \mathrm{alg}}\! \ltimes A \]
such that $\| a - c \| < \varepsilon$, where $u_{ij}^\pi$ are matrix coefficients of $\pi$ with respect to a fixed basis and $\dG {\, }_{\alpha, \mathrm{alg}}\! \ltimes A$ is the $*$-subalgebra of $\dG \ltimes A$ generated by the polynomial algebra $\mathrm{Pol}(G)$ of $G$ and $A$.

We may assume $\| c^1 \| = 1$.
Let $F \subset {\rm Irr}(G)$ be the support of $c$, $n$ the number of nonzero $c^\pi_{ij}$'s.
Applying the previous lemma for $X = \{c^\pi_{ij} \}$ to get $x \in A_+$ such that
\begin{align*}
&\| x c^1 x \| \geq 1 - \varepsilon , \\
&\| (1 \otimes x c^\pi_{ij}) \alpha_\pi(x) \| < \varepsilon / n.
\end{align*}
Hence we get $\|x c x - x c^1 x \| < 1 - 2 \varepsilon$. The rest of the argument is completely the same as Lemma 10 in \cite{MR1424962}.
\end{proof}

\subsection{Classification}
An important feature of Kirchberg algebras is the Kirchberg-Phillips classification: two Kirchberg algebras are isomorphic if and only if there is a $\KK$-equivalence between them preserving unit classes (Theorem 4.2.4 of \cite{MR1745197}). It is generalized for equivariant setting when the actions have Rokhlin property.

\begin{lem}\label{lem:intw}
Let $A$ and $B$ be $G$-$\Cst$-algebras with $G$-actions $\alpha$ and $\beta$.
Assume $B$ has Rokhlin property and we have a $*$-homomorphism $\varphi: A \to B$ such that there exists a sequence of unitaries $(u_n)_{n \in \mathbb{N}} \in B \otimes C(G)$ such that
\[\Ad(u_n(g)) \circ \varphi(x)\to \beta_g \circ \varphi \circ \alpha_g^{-1} (x) \text{ as } n \to \infty\]
Then we have a $G$-equivariant $\ast$-homomorphism $\psi: A \to B$ which is approximately unitarily equivalent to $\varphi$.
Moreover if $\varphi$ is an isomorphism, then $\psi$ can be taken to be an isomorphism.
\end{lem}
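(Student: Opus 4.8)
The plan is to adapt Izumi's intertwining argument (Theorem 3.5 of \cite{MR2053753}) to compact group actions, using the Rokhlin property of $B$ as an averaging device. Everything reduces to the following one-step assertion: for every finite $F \subset A$ and every $\varepsilon > 0$ there is a unitary $w \in B$ such that $\|\Ad(w)\varphi(x) - \varphi(x)\| < \varepsilon$ and $\|\beta_g(\Ad(w)\varphi(x)) - \Ad(w)\varphi(\alpha_g x)\| < \varepsilon$ for all $x \in F$ and all $g \in G$; granting a supply of such $w$'s, an Elliott-type approximate intertwining will produce $\psi$. To prepare for this, I would first extract an approximate cocycle condition from the hypothesis. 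Writing $\theta_g := \beta_g \circ \varphi \circ \alpha_g^{-1}$, the convergence $\Ad(u_n(g))\circ\varphi \to \theta_g$ together with the exact relation $\theta_{gh} = \beta_g \circ \theta_h \circ \alpha_g^{-1}$ forces $u_n(gh) \approx \beta_g(u_n(h))\, u_n(g)$ modulo the relative commutant $\varphi(A)'$, uniformly on finite sets, so that $g \mapsto u_n(g)$ is an approximate $\beta$-$1$-cocycle in $B \otimes C(G)$.

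Next I would invoke the Rokhlin property of $B$, which by definition supplies a $G$-equivariant unital $\ast$-homomorphism $C(G) \to \mathfrak{C}_\beta B$, equivalently (Lemma \ref{lem:asymprep}) an asymptotically central family of Rokhlin elements transforming correctly under $\beta$, and hence the averaging map of \eqref{form:ab}. The point of the averaging process is that an approximate cocycle becomes an approximate coboundary: averaging the $g$-variable of $u_n$ against the Rokhlin tower yields, after reindexing, unitaries $w_n \in B$ that are asymptotically central relative to $\varphi(A)$ and satisfy $u_n(g) \approx \beta_g(w_n^*)\, w_n$ modulo $\varphi(A)'$, uniformly in $g$ on $F$. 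A short computation using $\beta_g\varphi(x) = \theta_g(\alpha_g x) \approx u_n(g)\varphi(\alpha_g x)u_n(g)^*$ then gives $\beta_g(\Ad(w_n)\varphi(x)) \approx \Ad(w_n)\varphi(\alpha_g x)$, while asymptotic centrality gives $\Ad(w_n)\varphi(x) \approx \varphi(x)$ on $F$; this is exactly the assertion (i)--(ii) above.

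Finally, fixing an increasing exhaustion $F_1 \subset F_2 \subset \cdots$ of $A$ with dense union and a summable sequence $\varepsilon_k \searrow 0$, I would inductively choose unitaries $w_k$ as above so that, with $W_k := w_k\cdots w_1$, the map $\Ad(W_k)\circ\varphi$ is $\varepsilon_k$-equivariant on $F_k$ and $\|\Ad(W_{k+1})\varphi(x) - \Ad(W_k)\varphi(x)\| < \varepsilon_k$ for $x \in F_k$, the latter secured by the asymptotic centrality of $w_{k+1}$. The conjugates then converge pointwise to a $\ast$-homomorphism $\psi := \lim_k \Ad(W_k)\circ\varphi$, which is approximately unitarily equivalent to $\varphi$ by construction; since for each fixed $j$ the maps $\Ad(W_k)\circ\varphi$ are $\varepsilon_k$-equivariant on $F_j$ for all $k \geq j$ and converge to $\psi$ there, $\psi$ is exactly $G$-equivariant. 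When $\varphi$ is an isomorphism I would instead run the two-sided back-and-forth version of this intertwining, correcting $\varphi$ and a chosen approximate inverse alternately, so that the limiting $G$-equivariant homomorphisms are mutually inverse and $\psi$ is an isomorphism.

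The main obstacle lies in reconciling the two demands inside (i)--(ii): the correcting unitary $w_n$ must be central enough relative to $\varphi(A)$ to leave the map essentially unchanged on $F$ (which is what both the convergence of the intertwining and the approximate unitary equivalence to $\varphi$ require), yet it must vary under $\beta$ in the prescribed way $u_n(g) \approx \beta_g(w_n^*)\,w_n$ in order to cancel the obstruction cocycle. It is precisely the combination of asymptotic centrality and correct $\beta$-covariance built into the Rokhlin tower that makes these requirements compatible, and the delicate bookkeeping is to keep both the coboundary error and the centrality error controlled uniformly over the compact group $G$ throughout the induction, so that the limit is genuinely, and not merely approximately, equivariant.
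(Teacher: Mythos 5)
Your overall architecture (a one-step correction lemma fed into an Elliott-type intertwining) has the right shape, but the one-step assertion you reduce everything to is \emph{false} under the hypotheses of the lemma, and this is a genuine gap rather than a bookkeeping issue. If a unitary $w$ satisfies both (i) $\|\Ad(w)\varphi(x)-\varphi(x)\|<\varepsilon$ and (ii) $\|\beta_g(\Ad(w)\varphi(x))-\Ad(w)\varphi(\alpha_g x)\|<\varepsilon$ for all $x$ in a $G$-invariant finite set $F$ and all $g\in G$, then the triangle inequality gives $\|\beta_g(\varphi(x))-\varphi(\alpha_g x)\|<3\varepsilon$ on $F$; since $F$ and $\varepsilon$ are arbitrary, your one-step assertion would force $\varphi$ itself to be exactly $G$-equivariant. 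But the hypothesis of the lemma only says that $\varphi$ is uniformly unitarily conjugate to its twists $\beta_g\circ\varphi\circ\alpha_g^{-1}$, which is strictly weaker. Concretely, take an equivariant $\psi_0$ and set $\varphi:=\Ad(W)\circ\psi_0$ for a unitary $W\in B$ far from being $G$-invariant: the hypothesis holds exactly with $u_n(g):=\beta_g(W)W^*$, yet $\varphi$ has a large equivariance defect, so no $w$ as in (i)--(ii) can exist. The same example breaks the mechanism you propose: the coboundary condition $u_n(g)\approx\beta_g(w_n^*)w_n$ modulo $\varphi(A)'$ is, by a direct computation, equivalent to approximate equivariance of $\Ad(w_n)\circ\varphi$, so demanding in addition that $w_n$ be asymptotically central relative to $\varphi(A)$ is exactly demanding (i)--(ii) jointly; in the example the correcting unitary must essentially be $W^*$, which is nowhere near commuting with $\varphi(A)$. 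The tension you flag in your last paragraph is real, and the Rokhlin tower cannot resolve it.

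What the paper does instead avoids the cocycle language entirely and uses a weaker, defect-controlled displacement bound. Form the exactly equivariant homomorphism $\theta:A\to B\otimes C(G)$, $\theta(x)(g):=\beta_g\circ\varphi\circ\alpha_g^{-1}(x)$; by hypothesis pick a unitary $u\in B\otimes C(G)$ with $\|u(\varphi(x)\otimes 1)u^*-\theta(x)\|<\varepsilon$ on $F$; apply the Rokhlin averaging map $\chi:B\otimes C(G)\to B$ (approximately multiplicative on the relevant finite set, approximately the identity on $B\otimes 1$) and set $v:=\chi(u)|\chi(u)|^{-1}$. Then $\Ad(v)\varphi$ is within $\tfrac{3}{2}\varepsilon$ of $\chi\circ\theta$, hence $3\varepsilon$-equivariant, but it is close to $\varphi$ only up to the current equivariance defect: $\|\Ad(v)\varphi(x)-\varphi(x)\|\le\sup_{g\in G}\|\beta_g\varphi\alpha_g^{-1}(x)-\varphi(x)\|+O(\varepsilon)$. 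This is what makes the intertwining converge: the first step may move $\varphi$ a lot, but it shrinks the defect to $O(\varepsilon_1)$, so the $k$-th step moves the map by at most $O(\varepsilon_{k-1})+\varepsilon_k$, the sequence of conjugates is Cauchy for summable $\varepsilon_k$, and the limit is exactly equivariant and approximately unitarily equivalent to $\varphi$. If you replace your condition (i) by this defect-controlled bound and otherwise keep your induction, your argument goes through; the isomorphism case then follows by the two-sided back-and-forth you indicate, which is how the paper concludes via Izumi's intertwining (Theorem 3.5 of his Rokhlin paper and Lemma 5.1 of its sequel).
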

\begin{proof}
Put $\theta: A \to B \otimes C(G)$ as $\theta(x)(g) := \beta_g \circ \varphi \circ \alpha_g^{-1}(x)$. Then $\theta$ is a $G$-equivariant $\ast$-homomorphism.	Fix a $G$-invariant compact set $F \subset A$ and $\varepsilon > 0$.
By assumption, we may take a unitary $u \in B \otimes C(G)$ such that
\[\|u(\varphi(x) \otimes 1) u^* - \theta(x) \| < \varepsilon\]
for $x \in F$.
Now we take the Rokhlin approximation $\chi: B \otimes C(G) \to B$ such that
\begin{itemize}
\item $\| \chi(u)^* \chi(u) - 1\| < \varepsilon$, $\| \chi(u) \chi(u)^* - 1\| < \varepsilon,$
\item $\| \chi(u) \chi(\varphi(x)\otimes 1) \chi(u)^* - \chi \circ \theta(x) \| < \varepsilon$ for $x \in F$,
\item $\| \chi(\varphi(x) \otimes 1) - \varphi(x) \| < \varepsilon$ for $x \in F$.
\end{itemize}
Take the unitary $v := \chi(u) |\chi(u)|^{-1}$. Then we have $\| \chi(u) - v \| < \varepsilon/2$ and hence
\[\| v \varphi(x) v^* - \chi \circ \theta(x) \| < \frac{3}{2}\varepsilon.\]
Since $\chi \circ \theta(x)$ is $G$-equivariant, we get
\[\| v \varphi(\alpha_g(x)) v^* - \beta_g(v \varphi(x) v^*)\| < 3 \varepsilon.\]Moreover since
\[\|\theta(x) - \varphi(x) \otimes 1 \| = \sup_{g \in G} \| \beta_g \circ \varphi \circ \alpha_g^{-1}(x) - x \|,\]
we get
\[\| \chi \circ \theta(x) - \varphi(x) \| < \sup_{g \in G} \| \beta_g \circ \varphi \circ \alpha_g^{-1}(x) - x \| + \varepsilon.\]
Therefore, the intertwining argument in Theorem 3.5 of \cite{MR2053753} and Lemma 5.1 of \cite{MR2047851} works for this situation and we obtain the conclusion.
\end{proof}

\begin{prp}\label{lem:KKconj}
Let $G$ be a compact group and let $A$ and $B$ be Kirchberg $G$-algebras with Rokhlin property. If there is a $\KK^G$-equivalence from $A$ to $B$ mapping $[1_A]$ to $[1_B]$, then they are conjugate.
\end{prp}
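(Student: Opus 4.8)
The plan is to realize the given $\KK^G$-equivalence by an honest equivariant isomorphism through Lemma~\ref{lem:intw}, whose hypothesis I must set up by hand. Write $\phi \in \KK^G(A,B)$ for the given equivalence, which maps $[1_A]$ to $[1_B]$. Its restriction $\Res_G \phi \in \KK(A,B)$ is again a $\KK$-equivalence sending $[1_A]$ to $[1_B]$, since $\Res_G$ is a functor preserving unit classes. As $A$ and $B$ are Kirchberg algebras, the Kirchberg--Phillips theorem (Theorem 4.2.4 of \cite{MR1745197}) then produces a unital $\ast$-isomorphism $\varphi \colon A \to B$ with $[\varphi] = \Res_G \phi$. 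To feed this into Lemma~\ref{lem:intw} I need unitaries $u_n \in B \otimes C(G)$ with $\Ad(u_n(g)) \circ \varphi \to \beta_g \circ \varphi \circ \alpha_g^{-1}$; once these are available the lemma upgrades $\varphi$ to a $G$-equivariant isomorphism $\psi \colon A \to B$, which is precisely a conjugacy $(A,\alpha) \cong (B,\beta)$.

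First I would record the pointwise identity in $\KK$. For each fixed $g$, the conjugate $\beta_g \circ \varphi \circ \alpha_g^{-1}$ is a unital homomorphism of class $[\beta_g] \circ [\varphi] \circ [\alpha_g]^{-1}$. Because $[\varphi] = \Res_G \phi$ is the image of an equivariant class, it is fixed by the conjugation action $\sigma \mapsto [\beta_g] \circ \sigma \circ [\alpha_g]^{-1}$ of $G$ on $\KK(A,B)$: an equivariant Kasparov module carries a unitary $G$-representation $U_g$ covering both $\alpha$ and $\beta$, and conjugating the underlying nonequivariant module by $U_g$ exhibits an isomorphism between the module and its $g$-translate. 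Hence $[\beta_g \circ \varphi \circ \alpha_g^{-1}] = [\varphi]$ for every $g$, so $\beta_g \circ \varphi \circ \alpha_g^{-1}$ and $\varphi$ are unital homomorphisms into the Kirchberg algebra $B$ with equal $\KK$-class, and therefore approximately unitarily equivalent for each \emph{individual} $g$ by the uniqueness half of Kirchberg--Phillips.

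The substantive step is to make these unitaries depend continuously on $g$, i.e.\ to promote the pointwise equivalences to a single approximate unitary equivalence inside the continuous field $C(G) \otimes B = C(G,B)$. I would package the data as two unital homomorphisms $\theta, \varphi \otimes 1 \colon A \to C(G,B)$, where $\theta(x)(g) = \beta_g \varphi \alpha_g^{-1}(x)$ and $(\varphi \otimes 1)(x)(g) = \varphi(x)$; the family $u_n$ sought above is exactly an approximate unitary equivalence $\Ad(u_n) \circ (\varphi \otimes 1) \to \theta$ by unitaries of $C(G,B)$. Both maps are fiberwise full and, by the previous step, have equal fiberwise $\KK$-class $[\varphi]$; moreover $\theta$ is $G$-equivariant for the action $\beta \otimes \mathrm{lt}$, under which $C(G,B)$ is the induced algebra $\mathrm{Ind}_{\{e\}}^G B$, so the induction--restriction adjunction identifies the equivariant class of $\theta$ with the bare class $[\varphi]$. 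Using this equivariance to control the parametrised data, I would then invoke the continuous-field (parametrised) uniqueness theorem for homomorphisms into $C(G,B)$, whose fibers are the Kirchberg algebra $B$, to conclude that $\theta$ and $\varphi \otimes 1$ are approximately unitarily equivalent over $C(G)$, yielding the desired $u_n$. Applying Lemma~\ref{lem:intw} (with $B$ having the Rokhlin property) then finishes the proof.

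The main obstacle is precisely this last passage from fiberwise to continuous-in-$g$ approximate unitary equivalence: fiberwise $\KK$-agreement does not by itself force agreement of the parametrised classes over the base space $G$, so the equivariance of $\phi$ must be used honestly---through the identification of $\theta$ with an induced, hence $\KK^G$-understood, homomorphism---to guarantee that the continuous-field uniqueness theorem applies. Everything else, namely the reduction via Kirchberg--Phillips and the concluding application of Lemma~\ref{lem:intw}, is formal once this continuous family of unitaries is in hand.
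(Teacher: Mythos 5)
Your proposal is correct in substance and, at the structural level, is the same proof as the paper's: produce a unital isomorphism $\varphi\colon A\to B$ with $[\varphi]=\Res_G\phi$ by Kirchberg--Phillips, establish that $\theta$ and $\varphi\otimes 1$ define the \emph{same} class in non-equivariant $\KK(A,B\otimes C(G))$, apply Phillips' uniqueness theorem to get unitaries in $B\otimes C(G)$ (your ``continuous-field uniqueness theorem'' is nothing more exotic than Theorem 4.1.1 of \cite{MR1745197} with target $B\otimes C(G)\cong B\otimes\cO_\infty\otimes C(G)$; the field here is trivial), and feed them into Lemma~\ref{lem:intw}. The genuine difference is how the decisive identity $[\theta]=[\varphi\otimes 1]$ is obtained. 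The paper gets it concretely: it represents $\phi$ by an honest $G$-equivariant $\ast$-homomorphism $\varphi'\colon q_sA\to B\otimes\Kop_G$ (Cuntz picture of $\KK^G$), for which the coaction square $\beta\circ\varphi'=(\varphi'\otimes\id_{C(G)})\circ\alpha$ commutes on the nose, and then lets this identity descend to $\KK(B,B\otimes C(G))$. You get it abstractly through the induction--restriction adjunction $\KK^G(A,\mathrm{Ind}_{\{e\}}^G B)\cong\KK(A,B)$; this buys a cleaner, category-level explanation of why equivariance of $\phi$ is exactly what is needed, at the price of invoking Frobenius reciprocity for $\KK^G$ rather than just the Cuntz picture.

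One step of your adjunction route still has to be written down, and it is precisely the point you flag as the crux. Knowing that $[\theta]^G$ corresponds to $[\varphi]$ under the adjunction is not yet the hypothesis of the uniqueness theorem, because $\varphi\otimes 1$ is not equivariant and so has no equivariant class to compare. What completes the argument is: the equivariant class $[\eta_B]\circ\phi$, where $\eta_B\colon B\to\mathrm{Ind}_{\{e\}}^G B$ is the unit of the adjunction (the untwisted form of $b\mapsto b\otimes 1$), \emph{also} corresponds to $[\varphi]$, since $\mathrm{ev}_e\circ\eta_B=\id_B$ and $\Res_G\phi=[\varphi]$ --- this is where the equivariance of the given equivalence actually enters, not in the identification of $\theta$ as an induced homomorphism (which uses only $\varphi$). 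Injectivity of the adjunction then gives $[\theta]^G=[\eta_B]\circ\phi$ in $\KK^G$, and restricting to non-equivariant $\KK$ and untwisting yields $[\theta]=[\varphi\otimes 1]$; equivalently, this is naturality of the adjunction unit at the morphism $\phi$ of $\KK^G$. This is a short formal argument, so the gap is fillable, but ``$\theta$ is induced, hence $\KK^G$-understood'' does not by itself discharge it. A minor remark: your pointwise step, that $[\beta_g\circ\varphi\circ\alpha_g^{-1}]=[\varphi]$ for each fixed $g$, is correct but ends up unused --- exactly because, as you yourself observe, fiberwise agreement of classes says nothing about the class over $C(G)$.
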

\begin{proof}
Let $\varphi ' :q_sA \to B \otimes \Kop _G$ be a $G$-equivariant $\ast$-homomorphism representing a $\KK^G$-equivalence. Since $\varphi'$ is $G$-equivariant, we have $\beta \circ \varphi '=(\varphi' \otimes \id _{C(G)}) \circ \alpha$, which implies 
\[[\beta]=([\varphi '] \otimes \id _{C(G)})\circ [\alpha ] \circ [\varphi ']^{-1} \in \KK (B,B \otimes C(G)).\]
Now, choose a $\ast$-isomorphism $\varphi:A\to B$ such that $[\varphi]=\Res _G[\varphi'] \in \KK(A,B)$ (it is possible thanks to Corollary 4.2.2 of \cite{MR1745197}). Then we obtain two $\ast$-homomorphisms $(\varphi \otimes \id _{C(G)}) \circ \alpha \circ \varphi^{-1}$ and $\beta$, which determines the same element in $\KK(B,B\otimes C(G))$. Thanks to Theorem 4.1.1 of \cite{MR1745197}, they are asymptotically unitarily equivalent. By Lemma \ref{lem:intw}, two $G$-actions $(\varphi \circ \alpha _g \circ \varphi ^{-1})_g$ and $\beta $ on $B$ are conjugate.
\end{proof}

\begin{thm}\label{thm:Kir}
Let $G$ be a Hodgkin Lie group.
\begin{itemize}
\item A Kirchberg $G$-algebra $A$ with continuous Rokhlin property is $G$-equivariantly isomorphic to $A^\alpha \otimes \cO(G)$.
\item Two Kirchberg $G$-algebras $A$ and $B$ with continuous Rokhlin property are isomorphic if and only if the fixed point algebras $A^\alpha$ and $B^\beta$ are isomorphic. When $A$ and $B$ are UCT-Kirchberg $G$-algebras, $A \cong B$ if and only if $(\K_0(A^\alpha), [1_{A^\alpha}], \K_1(A^\alpha )) \cong (\K_0(B^\beta ), [1_{B^\beta}], \K_1(B^\beta))$. 
\item A UCT-Kirchberg algebra $A$ in the Cuntz standard form (i.e.\ $[1_A]=0 \in K_0(A)$) admits a $G$-action with continuous Rokhlin property if and only if there is a countable abelian group $M$ such that $K_*(A)$ is isomorphic to $M^{\oplus n}$ where $n=2^{\rank G-1}$. In this case, $M\cong \K_0(A^\alpha) \oplus \K_1(A ^\alpha)$.
\end{itemize}
\end{thm}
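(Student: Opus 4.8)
The plan is to reduce each of the three assertions to the model action $\cO(G)$ of Theorem~\ref{thm:model} together with the $\KK^G$-classification already at hand, so that the theorem becomes an exercise in equivariant Kirchberg--Phillips classification. For the first bullet, given a Kirchberg $G$-algebra $A$ with continuous Rokhlin property, I would compare $A$ with $A^\alpha \otimes \cO(G)$, the latter carrying the action $\id \otimes \beta$ where $\beta$ is the dual action of Theorem~\ref{thm:model}. This algebra inherits continuous Rokhlin property from the $\cO(G)$-factor, since a continuous Rokhlin map $C(G) \to \mathfrak{C}_\beta \cO(G)$ induces one into $\mathfrak{C}_{\id \otimes \beta}(A^\alpha \otimes \cO(G))$; and it is a Kirchberg algebra once one knows $A^\alpha$ is (see below), being then a tensor product of two Kirchberg algebras. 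Theorem~\ref{thm:KKHod} supplies a unit-preserving $\KK^G$-equivalence $A \sim A^\alpha \otimes C(G)$, while tensoring the unit-preserving $\KK^G$-equivalence $\cO(G) \sim C(G)$ of Theorem~\ref{thm:model} with $A^\alpha$ gives $A^\alpha \otimes \cO(G) \sim A^\alpha \otimes C(G)$, again unit-preserving. Composing produces a $\KK^G$-equivalence $A \sim A^\alpha \otimes \cO(G)$ sending $[1_A]$ to $[1_{A^\alpha \otimes \cO(G)}]$. As continuous Rokhlin property is stronger than Rokhlin property, both algebras are Kirchberg $G$-algebras with Rokhlin property, and Proposition~\ref{lem:KKconj} upgrades this equivalence to a $G$-equivariant isomorphism.

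The second bullet follows formally: its forward direction is the restriction of a $G$-equivariant isomorphism to fixed-point algebras, and conversely an isomorphism $A^\alpha \cong B^\beta$ tensored with $\id_{\cO(G)}$ is a $G$-equivariant isomorphism $A^\alpha \otimes \cO(G) \cong B^\beta \otimes \cO(G)$, which by the first bullet applied to $A$ and to $B$ gives $A \cong B$; in the UCT case $A^\alpha$ and $B^\beta$ are UCT-Kirchberg algebras (UCT passes between $A$ and $A^\alpha$ as in Corollary~\ref{cor:classify}, since $A \sim_{\KK} A^\alpha \otimes C(G)$ and $C(G)$ is UCT), so by Kirchberg--Phillips their isomorphism is equivalent to isomorphism of the pointed triples $(\K_0, [1], \K_1)$. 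For the third bullet the forward implication is precisely Corollary~\ref{cor:Krid}. For the converse, given $A$ in Cuntz standard form with $\K_*(A) \cong M^{\oplus n}$, $n = 2^{\rank G - 1}$, I would choose a UCT-Kirchberg algebra $D$ with invariant $(\K_0(D), [1_D], \K_1(D)) = (M, 0, 0)$ from the range-of-invariant part of the classification. Since $\cO(G) \sim_{\KK} C(G) \sim_{\KK} \comp^n \oplus C_0(\real)^n$, the Künneth formula yields $\K_0(D \otimes \cO(G)) \cong \K_1(D \otimes \cO(G)) \cong M^{\oplus n}$ with $[1_{D \otimes \cO(G)}] = 0$, so $D \otimes \cO(G)$ is a UCT-Kirchberg algebra with the same pointed $\K$-theory as $A$, whence $A \cong D \otimes \cO(G)$ by Kirchberg--Phillips. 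Transporting the model action $\id \otimes \beta$ along this isomorphism endows $A$ with a continuous Rokhlin $G$-action, and $M \cong \K_0(A^\alpha) \oplus \K_1(A^\alpha)$ is then read off from Corollary~\ref{cor:Krid}.

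The $\KK$-theoretic composition and the Künneth computations are routine. The step I expect to be the real obstacle is the permanence claim that $A^\alpha$ is itself a Kirchberg algebra, used throughout: one must show that the fixed-point algebra, equivalently the crossed product $G \ltimes A \cong A^\alpha \otimes \Kop$ of Lemma~\ref{lem:avbdl}, remains simple, separable, nuclear and purely infinite for a compact Rokhlin action on a Kirchberg algebra. Separability and nuclearity are immediate, but simplicity and pure infiniteness demand the averaging technique of Subsection~\ref{section:3}; once this permanence is secured, the remaining arguments are direct applications of Theorems~\ref{thm:KKHod} and~\ref{thm:model}, Proposition~\ref{lem:KKconj}, Corollary~\ref{cor:Krid} and the equivariant Kirchberg--Phillips classification.
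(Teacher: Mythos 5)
Your proposal is correct and follows essentially the same route as the paper: a unit-preserving $\KK^G$-equivalence $A \sim A^\alpha \otimes \cO(G)$ assembled from Theorems \ref{thm:KKHod} and \ref{thm:model}, upgraded to a $G$-equivariant isomorphism by Proposition \ref{lem:KKconj}, with the second and third bullets deduced from this via Kirchberg--Phillips and the $\KK$-equivalence $C(G) \sim \comp^n \oplus C_0(\real)^n$ exactly as in Corollaries \ref{cor:classify} and \ref{cor:Krid}. The one step you flag as the real obstacle --- that $A^\alpha$ is again a Kirchberg algebra --- is not reproved in the paper either but simply cited from Gardella (Corollary 3.20 of \cite{mathOA14081946}), so your instinct that it rests on the averaging technique is right, and it is available as a known result rather than something the proof must establish.
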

\begin{proof}
By Theorem \ref{thm:KKHod}, for every $G$-$\Cst$-algebra $A$ with continuous Rokhlin property, there is a $\KK^G$-equivalence from $A^\alpha \otimes \cO(G)$ to $A$ preserving unit elements. Moreover, as is shown in Theorem \ref{thm:model}, both of  $A^\alpha \otimes \cO(G)$ and $A$ has continuous Rokhlin property. Since the fixed point algebra $A^\alpha$ of a (continuous) Rokhlin action on Kirchberg algebras is again a Kirchberg algebra (Corollary 3.20 of \cite{mathOA14081946}), $A$ and $A^\alpha \otimes \cO(G)$ are $G$-equivariantly isomorphic by Proposition \ref{lem:KKconj}. 

The second assertion follows from Theorem 4.2.4 of \cite{MR1745197}, Corollary \ref{cor:classify} and Proposition \ref{lem:KKconj}. The third assertion follows from the $\KK$-equivalence $C(G) \sim \comp ^n \oplus C_0(\real)^n$ as in the proof of Corolalry \ref{cor:Krid}.
\end{proof}

\subsection{Rokhlin property vs.\ continuous Rokhlin property}\label{section:5.3}
We conclude the article by a comparison of Rokhlin and continuous Rokhlin properties. 
\begin{prp}\label{prp:injcRok}
Let $G$ be a compact group and let $A$ be a UCT-Kirchberg $G$-algebra with Rokhlin property. Then, $A$ has continuous Rokhlin property if (and only if) it is $\J_G$-injective. 
\end{prp}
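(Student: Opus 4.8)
The ``only if'' direction indicated by the parentheses is immediate: by Theorem \ref{thm:Rokinj}, continuous Rokhlin property already implies $\J_G$-injectivity. So the substance is the converse, and the plan is to upgrade the \emph{approximate} representability supplied by the Rokhlin property to the \emph{asymptotic} representability that characterizes continuous Rokhlin property, using $\J_G$-injectivity precisely to make this upgrade continuous.

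First I would pass to the dual picture via Lemma \ref{lem:asymprep}(1): $(A,\alpha)$ has continuous Rokhlin property if and only if the dual action $\hat{\alpha}$ of $\hat{G}$ on $B:=G\ltimes A$ is asymptotically representable, and by Lemma \ref{lem:avbdl} the algebra $B\cong A^\alpha\otimes\Kop$ is stably a UCT-Kirchberg algebra. The two inputs then read as follows. The Rokhlin property furnishes the discrete (approximate) counterpart of asymptotic representability: a sequence of genuine representations $w_n$ of $\hat{G}$ on $B$ with $w_n^*(1\otimes b)w_n\to\hat{\alpha}(b)$ for all $b$, equivalently a coherent family of unital $\ast$-homomorphisms $C(G)\to M(B)$. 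On the other hand, $\J_G$-injectivity provides — through the crossed-product reformulation of \cite{mathOA150806815} and the completely positive asymptotic-morphism picture \eqref{form:KKcp} — a strictly continuous path of implementing unitaries, i.e.\ the statement that $\hat{\alpha}$ is \emph{asymptotically inner} in the sense of Remark \ref{rmk:inner}. The goal is to fuse these into a single strictly continuous path $u_t$ that implements $\hat{\alpha}$ and satisfies the corepresentation relation, both only asymptotically, which is exactly asymptotic representability.

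The engine for fusing them is the asymptotic uniqueness theorem for Kirchberg algebras (Theorem 4.1.1 of \cite{MR1745197}) in its equivariant form via the intertwining Lemma \ref{lem:intw}: two unital $\ast$-homomorphisms from a separable algebra into a Kirchberg algebra with equal $\KK$-class are asymptotically unitarily equivalent, and such an equivalence is literally a continuous connecting path. Applying this to consecutive representations $w_n$ and $w_{n+1}$ — whose classes in $\KK(C(G),B)$ coincide, being sampled from one homomorphism into the sequence algebra — yields continuous interpolating paths, which I would then splice together (reindexing as in the proof of Proposition \ref{prp:RokcRok}). The UCT is consumed in reducing the comparison of these classes to $\K$-theory through the universal coefficient theorem.

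The main obstacle, and the exact point where $\J_G$-injectivity is indispensable rather than merely convenient, is that the unitaries implementing the successive asymptotic unitary equivalences must be chosen so that the spliced path remains an asymptotic \emph{representation} of $\hat{G}$ while continuing to implement $\hat{\alpha}$ — on the $A$-side this is the requirement that the connecting unitaries be asymptotically central, so that the resulting homomorphism lands in the central path algebra $\mathfrak{C}_\alpha A$ and not merely in $\mathfrak{A}_\alpha A$. Producing such equivariant connecting unitaries is precisely the content of asymptotic innerness of $\hat{\alpha}$, which is what $\J_G$-injectivity delivers; without it one obtains only a path in $\mathfrak{A}_\alpha A$, matching the $\J_G$-injective but non-continuous-Rokhlin example of Remark \ref{rmk:inner}. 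Thus the heart of the argument is to run the Kirchberg--Phillips interpolation along the asymptotically inner path coming from $\J_G$-injectivity, verify that the corepresentation relation and the implementation both survive asymptotically along the assembled continuous path, and finally feed this back through Lemma \ref{lem:asymprep}(1) to conclude that $(A,\alpha)$ has continuous Rokhlin property.
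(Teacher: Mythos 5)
Your overall flavor---Kirchberg--Phillips asymptotic uniqueness plus splicing, with the Rokhlin property and equivariance of connecting unitaries identified as the crux---is in the right territory, but the load-bearing step of your argument is an unproven claim: that $\J_G$-injectivity ``provides a strictly continuous path of implementing unitaries, i.e.\ that $\hat{\alpha}$ is asymptotically inner.'' Nothing in the paper, and nothing in \cite{mathOA150806815} or the picture \eqref{form:KKcp}, gives this. Remark \ref{rmk:inner} is an informal comment, not a theorem; it asserts no implication from $\J_G$-injectivity to asymptotic innerness, and proving such an implication is essentially the entire difficulty of the proposition: $\J_G$-injectivity is a homotopy-invariant $\KK^G$-statement (a class $\xi$ with $\xi \circ [\iota] = \id_A$), and \eqref{form:KKcp} only represents it by a completely positive asymptotic morphism after suspension and stabilization; converting that into genuine norm-continuous unitary paths on the unsuspended algebra is exactly what requires the Kirchberg--Phillips machinery. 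Worse, you invoke this same unproven claim a second time to make the connecting unitaries between consecutive Rokhlin towers equivariant/asymptotically central, so the main obstacle you correctly identify is ``resolved'' by a circular appeal to (a statement at least as strong as) the one being proved.

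Compare with the paper's actual mechanism, which distributes the work differently. First, $\xi$ is converted into a genuine $G$-equivariant unital $\ast$-homomorphism $\theta : A \otimes D \to A$ with $[\theta \circ \iota] = \id_A$, where $D$ is a Kirchberg $G$-algebra $\KK^G$-equivalent to $C(G)$: Kirchberg--Phillips existence gives a non-equivariant $\phi$ with $[\phi] = \Res_G \xi$, the intertwining Lemma \ref{lem:intw} (this is where the discrete Rokhlin property enters) replaces it by an equivariant $\phi'$, and Corollary \ref{cor:equiv} (this is where the UCT is consumed) together with a correction $\tilde{\sigma}$ fixes the $\KK^G$-class of the composite with $\iota$. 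Second, Theorem 4.1.1 of \cite{MR1745197} yields a continuous path of unitaries $u_t \in A$---with no equivariance or centrality whatsoever---such that $u_t \theta(a \otimes 1) u_t^* \to a$. Third, equivariance is manufactured afterwards: the discrete Rokhlin averaging maps $\chi_n : A \otimes C(G) \to A$ are applied to the $G$-invariant elements $\alpha(u_t) \in A \otimes C(G)$, $\alpha(u_t)(g) = \alpha_g(u_t)$, producing $G$-invariant unitaries $v_{n,t}$, which are then spliced by linear interpolation. So continuity comes from Theorem 4.1.1 and equivariance from the discrete Rokhlin averaging; neither comes for free from $\J_G$-injectivity, which serves only as $\KK^G$-theoretic input. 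A secondary flaw in your plan: for a general compact group $G$ the Rokhlin property gives a unital $\ast$-homomorphism $C(G) \to A_{\alpha,\infty} \cap A'$ whose lifts $w_n$ are merely completely positive and asymptotically multiplicative (there is no semiprojectivity available to make them multiplicative), so Theorem 4.1.1 cannot be applied to ``consecutive representations $w_n$, $w_{n+1}$'' as you propose---they are not $\ast$-homomorphisms, and individually they carry no $\KK$-class.
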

\begin{proof}
We know we have $\xi \in KK^G(A \otimes C(G), A)$ such that $\xi \circ [\iota] = \id _A$. Our goal is to prove that there exists a $G$-equivariant asymptotic morphism $\psi_t: A \otimes C(G) \to A$ such that $\psi_t \circ \iota$ is asymptotically equal to $\id _A$, so that $f \mapsto \psi_t(1 \otimes f)$ gives the desired continuous Rokhlin approximation.

First, we start with taking a $G$-equivariant unital $\ast$-homomorphism $\theta: A \otimes D \to A$ such that $[\theta \circ \iota] = \id_A$, where $D$ is a Kirchberg $G$-algebra with a $G$-equivariant unital $\ast$-homomorphism $i:C(G) \to D$ which induces a $\KK^G$-equivalence (for example, we obtain it by the Cuntz-Pimsner construction \citelist{\cite{MR1426840}\cite{MR2109936}} for the Hilbert $C(G)$-bimodule $L^2(G)^\infty \otimes C(G)$ together with the natural $G$-action).

Let $\phi: A \otimes D \to A$ be a unital $*$-homomorphism with $[\phi]=\Res _G \xi$ (note that $\xi_*[1_{A \otimes D}]=\xi_* i _* [1_A]=[1_A]$). 
Then, we have $[\phi \circ \beta]=[\alpha \circ \phi] \in \KK(A \otimes D, A \otimes C(G))$ where we write $\beta$ for the $G$-action on $A \otimes D$. 

By Lemma \ref{lem:intw}, we obtain a $G$-equivariant unital $\ast$-homomorphism $\phi' : A \otimes D \to A$ which is approximately unitarily equivalent to $\phi$. 
Since $[\phi'] \circ \iota \in \KK^G(A,A)$ induces the identity on $\K_*(A)$, it is a $\KK^G$-equivalence by Corollary \ref{cor:equiv}. 

Let $\sigma : A \otimes D \to A$ be a unital $\ast$-homomorphism representing $([\phi'] \circ \iota)^{-1} \otimes \id_{D}$ under the isomorphism $\KK(A \otimes D,A)\cong \KK^G(A \otimes D, A \otimes D)$. 
Then, the unital $G$-equivariant $\ast$-homomorphism 
\[ \tilde{\sigma}: A \otimes D \to A \otimes C(G), \ \tilde{\sigma}(x)(g):=\alpha _g(\sigma (x)) \]
satisfies $[i \circ \tilde{\sigma}]=([\phi'] \circ \iota)^{-1} \otimes \id_{D}$. 
Now, $\theta :=\phi ' \circ i \circ \tilde{\sigma}$ is the desired $G$-equivariant unital $\ast$-homomorphism because $(([\phi'] \circ \iota)^{-1} \otimes \id_{D})\circ \iota=\iota \circ ([\phi'] \circ \iota)^{-1}$.

By Theorem 4.1.1 of \cite{MR1745197}, we get a path of unitaries $(u_t) \in A$ such that
\[u_t \theta(a \otimes 1) u_t^* \to a\]
for any $a \in A$.	
By an inductive reparametrization, we may assume for any $n \in \mathbb{N}$,
\[\| u_t \theta(a \otimes 1) u_t^* - a \| < 2^{-n} \| a \|\]
for any $t \geq n$ and $a \in F_n$, where $F_n$ is an increasing sequence of self-adjoint compact $G$-invariant subset of $A$ which satisfies
\begin{itemize}
\item $A = \overline{\bigcup_n F_n}$,
\item $A \otimes D = \overline{\bigcup_n \theta^{-1}(F_n)}$ and
\item $\{u_s \mid s \leq n-1\} \subset F_n$.
\end{itemize}
Note that since $F_n$ is $G$-invariant, we also get
\begin{itemize}
\item $\|\alpha(u_t) (\theta(a \otimes 1) \otimes 1) \alpha(u_t)^* - a \otimes 1 \|  < 2^{-n} \| a \|$ for $a \in F_n$,
\item $\|\Ad((u_{n+1} \otimes 1)\alpha(u_{n+1})^*) \alpha(u_n) - \alpha(u_n) \|  < 2^{-n}$.
\end{itemize}
Thanks to the equations above, again by induction, we may take Rokhlin averaging maps $\chi_k: A \otimes C(G) \to A$ such that
\begin{enumerate}
\item $\|\chi_n(\alpha(u_t)) \theta(a \otimes 1) \chi_n(\alpha(u_t))^* - a\| < 2^{-n} \| a \|$,
\item $\|\Ad (u_{n+1} \chi_n(\alpha(u_{n+1}))^*)( \chi_n(\alpha(u_n))) - \chi_n(\alpha(u_n)) \| < 2^{-n}$,
\item $\|\Ad (\chi_n(\alpha(u_n)) \chi_{n-1}(\alpha(u_n))^*)( \chi_{n-1}(\alpha(u_{n-1}))) - \chi_{n-1}(\alpha(u_{n-1})) \| < 2^{-n+1}$,
\item $\| \chi_n(\alpha(u_t))^* \chi_n(\alpha(u_t)) - 1 \|, \| \chi_n(\alpha(u_t))^* \chi_n(\alpha(u_t)) - 1 \| < 2^{-n}$.
\end{enumerate}
for $n \leq t \leq n+1$ and $a \in F_n$ (actually we do not need (2) for the later argument, but we put this to get (3) in the induction).
Then due to the condition (4), for $n \leq t \leq n+1$,
\[v_{n,t} := \chi_n(\alpha(u_t)) |\chi_n(\alpha(u_t))|^{-1}\]
is a $G$-invariant unitary such that
\[\| v_{n,t} - \chi_n(\alpha(u_t)) \| < 2^{-n-1}. \]
Hence we rewrite (1) and (3) as
\begin{enumerate}
\item[(1')] $\|v_{n,t} \theta(a \otimes 1) v_{n,t}^* - a \| < 2^{-n+1} \| a \|$,
\item[(3')] $\|\Ad(v_{n+1,n+1} v_{n,n+1}^*)(v_{n,n}) - v_{n,n} \| < 2^{-n+2}$,
\end{enumerate}
for any $n \leq t \leq n+1$, $a \in F_n$. Note that (1') implies $\| \Ad (v_{n,t}v_{n+1,t}^*)(a) -a \| <2^{-n+2}\ssbk{a}$ for any $a \in F_n$. Now, we construct a desired path
\[
\psi_t(x) := \begin{cases} {\rm Ad}(v_{n+1,n+1} v_{n,n+1}^* v_{n,t/2 - n})\theta(x) & \text{for } t \in [2n,2n+1]\\
\frac{t-2n-1}{2} {\rm Ad}(v_{n+1,n+1} v_{n,n+1}^* v_{n,n}) \theta(x) + \frac{t - 2n - 2}{2} {\rm Ad}(v_{n,n}) \theta(x) & \text{for } t \in [2n+1,2n+2].
\end{cases}
\]
From (1') and (3'), $\psi_t$ is a $G$-equivariant asymptotic morphism since
\ma{
&\| \Ad(v_{n+1,n+1} v_{n,n+1}^* v_{n,n}) \circ \theta(x) - \Ad(v_{n,n}) \circ \theta(x) \|\\
<&\| \Ad (v_{n,n}v_{n+1,n+1} v_{n,n+1}^* ) \circ \theta(x) - \Ad(v_{n,n}) \circ \theta(x) \| +2^{-n+3} < 2^{-n+4}
}
for any $x \in \theta^{-1}(F_n)$. Moreover again from (1') and (3'), $\| \psi_t(a \otimes 1) - a \| \to 0$ as $t \to \infty$, as desired.
\end{proof}

We remark that Proposition \ref{prp:injcRok} holds for general compact groups. In particular, when $G$ is finite, it is related to Izumi's classification of finite group actions on UCT-Kirchberg algebras with Rokhlin property~\cite{MR2047851}. For any finite group $G$, a complete classification of UCT-Kirchberg $G$-algebras with Rokhlin property is given in Corollary 5.4 of \cite{MR2047851} by their $\K_*$-groups as CCT $G$-modules and $[1_A] \in \K _0(A) ^G$. 

In fact, it is an immediate consequence of Theorem \ref{thm:Rokinj} that the $\K_*$-group of $G$-$\Cst$-algebras with continuous Rokhlin property is relatively projective. 
The class of relatively projective modules is strictly smaller than the class of CCT modules although they coincide under some reasonable assumptions. 
Actually, Katsura \cite{RIMS} shows that every CCT modules are given by the third term of a pure exact sequence whose first and second term is relatively projective (cf.\ Proposition \ref{prp:RokcRok}). 
Hence we obtain a UCT-Kirchberg $G$-algebra with Rokhlin property which does not have continuous Rokhlin property. 
Moreover, relative projectivity is also a sufficient condition for continuous Rokhlin property.

\begin{lem}\label{lem:relprojinj}
Let $A$ be the UCT-Kirchberg $G$-algebra such that $\K_*(A)$ are relatively projective $G$-modules. Then, $A$ is $\J_G$-injective.
\end{lem}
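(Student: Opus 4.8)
The plan is to read $\J_G$-injectivity through the relative homological algebra of \cite{mathOA150806815}. As a $G$-$\Cst$-algebra, $A \otimes C(G)$ (with the diagonal action) is equivariantly isomorphic to the induced algebra $\Ind_e^G \Res_G A$, and the canonical inclusion $\iota \colon A \to A \otimes C(G)$, $a \mapsto a \otimes 1$, is precisely the unit of the adjunction $\Res_G \dashv \Ind_e^G$ between $\Res_G \colon \KK^G \to \KK$ and induction from the trivial subgroup. Thus $A$ is $\J_G$-injective exactly when $[\iota]$ admits a left inverse $\sigma \in \KK^G(A \otimes C(G), A)$ with $\sigma \circ [\iota] = \id_A$. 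Since $\Ind_e^G$ is simultaneously a left and a right adjoint of $\Res_G$, the homological ideal $\J_G = \ker(\Res_G)$ has the induced algebras as its relatively projective (equivalently, relatively injective) objects, and the universal homological invariant attached to $A$ is $\K_*(A) = \K_*(\Res_G A)$ equipped with the natural $G$-module structure $g \mapsto \K_*(\alpha_g)$. I would begin by making all of these identifications precise.

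Next I would solve the splitting problem at the level of K-theory. Using the compatibility of K-theory with induction one has an isomorphism of $G$-modules $\K_*(A \otimes C(G)) \cong \Ind_e^G \K_*(A)$, under which $\K_*(\iota)$ is the module-level adjunction unit $\K_*(A) \to \Ind_e^G \K_*(A)$. By hypothesis $\K_*(A)$ is a relatively projective $G$-module, so this unit is a split monomorphism in the category of $G$-modules; I fix a $G$-module retraction $r \colon \Ind_e^G \K_*(A) \to \K_*(A)$ with $r \circ \K_*(\iota) = \id_{\K_*(A)}$.

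Finally I would realise $r$ and upgrade it to the desired equivariant splitting by means of the relative universal coefficient theorem of \cite{mathOA150806815}. Since $\K_*(A \otimes C(G)) \cong \Ind_e^G \K_*(A)$ is relatively projective, the higher relative $\Ext$-groups out of it vanish, so the relative UCT yields an isomorphism $\KK^G(A \otimes C(G), A) \cong \Hom_G(\K_*(A \otimes C(G)), \K_*(A))$; let $\sigma$ be the class corresponding to $r$, so that $\K_*(\Res_G \sigma) = r$. Then $\sigma \circ [\iota]$ and $\id_A$ are two classes in $\KK^G(A,A)$ inducing the same $G$-module endomorphism $r \circ \K_*(\iota) = \id_{\K_*(A)}$; because $\K_*(A)$ is relatively projective, the relative UCT again gives $\KK^G(A,A) \cong \Hom_G(\K_*(A), \K_*(A))$, whence $\sigma \circ [\iota] = \id_A$. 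This exhibits the required left inverse of $[\iota]$ and shows that $A$ is $\J_G$-injective.

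The step I expect to be the main obstacle is the application of the relative UCT: it requires $A$ and $A \otimes C(G)$ to lie in the relative bootstrap class $\Boot$ on which the theorem of \cite{mathOA150806815} is valid. For $A \otimes C(G) = \Ind_e^G \Res_G A$ this is immediate once $\Res_G A$ is in the ordinary bootstrap class, but for $A$ itself one must know that $\Res_G A$ being a UCT-algebra forces $A \in \Boot$. Establishing this membership, together with the precise $G$-module identification $\K_*(A \otimes C(G)) \cong \Ind_e^G \K_*(A)$ and the description of $\K_*(\iota)$ as the adjunction unit, is where the genuine content lies; the vanishing of the relative $\Ext$-groups emanating from relatively projective modules is then purely formal.
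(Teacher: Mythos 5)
The central step of your argument---lifting the $G$-module retraction $r$ to a class $\sigma\in\KK^G(A\otimes C(G),A)$ via a ``relative UCT'' of the form $\KK^G(\cdot,\cdot)\cong\Hom_G(\K_*(\cdot),\K_*(\cdot))$---is a genuine gap: no such theorem appears in \cite{mathOA150806815}, and none can hold at the generality you use it, because the $\zahl[G]$-module $\K_*(A)$ is far too coarse an invariant to compute $\KK^G$ (it does not even record the $R(G)$-module structure; for connected $G$ the action on $\K_*$ is trivial by homotopy invariance, so your formula would collapse $\KK^G$ to $\Hom_\zahl$). Symptomatically, your proof never uses the Rokhlin property of $A$, which is implicit in the statement: ``\emph{the} UCT-Kirchberg $G$-algebra'' with given $\K_*$ refers to Izumi's classification of Rokhlin actions (Corollary 5.4 of \cite{MR2047851}), and this is the setting in which the lemma lives (finite $G$). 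Without that hypothesis the conclusion is false. Concretely, let $G=\zahl/2$ act trivially on the UCT-Kirchberg algebra $A=\mathbb{M}_{2^\infty}\otimes\cO_\infty$, so that $\K_0(A)=\zahl[1/2]$ and $\K_1(A)=0$ are relatively projective trivial modules (Higman's criterion: $|G|=2$ acts invertibly on $\zahl[1/2]$). For a trivial action one has $\KK^G(A,A)\cong\KK(A,A)\otimes R(G)$, and every class factoring through an induced algebra such as $A\otimes C(G)$ lies in the ideal $\KK(A,A)\otimes(1+t)$, where $t$ is the sign representation; since $\id_A\otimes 1$ is not of the form $y\otimes(1+t)$ (compare the two coefficients, using $\id_A\neq 0$), the inclusion $A\to A\otimes C(G)$ admits no left inverse, i.e.\ $A$ is not $\J_G$-injective. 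Hence any argument that, like yours, applies to every UCT $G$-algebra with relatively projective $\K_*$ proves a false statement; the failure is located exactly at the claimed UCT isomorphisms.

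The paper's proof is where the Rokhlin property does the work. It chooses $M_*\oplus M_*'\cong N_*\otimes_\zahl\zahl[G]$, realizes $M_*'$ by a Rokhlin $G$-algebra $A'$ and $N_*$ by a Kirchberg algebra $B$ via Izumi's classification, and then uses Lemma 5.1 of \cite{MR2047851}---an equivariant realization result valid for Rokhlin actions, which is the correct substitute for your ``relative UCT''---to produce honest $G$-equivariant homomorphisms $\varphi:A\to B\otimes\cO(G)$ and $\varphi':A'\to B\otimes\cO(G)$ inducing the $\K$-theoretic inclusions. Corollary \ref{cor:equiv}, whose hypotheses again require the Rokhlin property on both sides, upgrades $[\varphi]\oplus[\varphi']$ to a $\KK^G$-equivalence, exhibiting $A$ as a $\KK^G$-direct summand of the $\J_G$-injective algebra $B\otimes\cO(G)$. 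Note also why the paper realizes the \emph{inclusion} into the induced module rather than your retraction $r$: Izumi's lemma produces homomorphisms realizing maps of that shape, and the direct-sum trick avoids ever having to realize a surjection. If you want to salvage your outline, replace the UCT steps by: (i) restrict to $A$ with Rokhlin property, (ii) realize the splitting through Izumi's Lemma 5.1 as above, and (iii) instead of proving $\sigma\circ[\iota]=\id_A$, show it is a $\KK^G$-equivalence via Corollary \ref{cor:equiv} and compose $\sigma$ with its inverse.
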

\begin{proof}
We write $\cO(G)$ for the model action in Lemma 5.2 of \cite{MR2047851}, which is $\KK ^G$-equivalent to $C(G)$ by the inclusion. Choose an isomorphism $M_* \oplus M_*' \cong N_* \otimes _{\zahl}\zahl [G]$ where $N_*$ are abelian groups. 
Let $A$, $A'$ be the UCT-Kirchberg $G$-algebras in the Cuntz standard form with Rokhlin property corresponding to $M_*$, $M_*'$ respectively and let $B$ be the UCT-Kirchberg algebra in the Cuntz standard form corresponding to $N_*$. 
By Lemma 5.1 of \cite{MR2047851}, we obtain $G$-equivariant $\ast$-homomorphisms $\varphi : A \to B \otimes \cO(G)$ and $\varphi ': A' \to B \otimes \cO(G)$ such that $[\varphi] \oplus [\varphi '] \in \KK^G(A \oplus A', B \otimes \cO(G))$ induces the inclusion of $\K_*$-groups and hence a $\KK ^G$-equivalence by Corollary \ref{cor:equiv}. 
Since $B \otimes \cO(G)$ is $\J_G$-injective, so are direct summands $A$ and $A'$.
\end{proof}

\begin{cor}
Under the one-to-one correspondence given in Corollary 5.4 of \cite{MR2047851}, any triplet $(M_0,x, M_1)$ such that $M_0$ and $M_1$ are relatively projective corresponds to a UCT-Kirchberg $G$-algebra with continuous Rokhlin property. 
\end{cor}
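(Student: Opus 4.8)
The plan is to deduce the statement by chaining together Izumi's classification (Corollary 5.4 of \cite{MR2047851}) with the two results established immediately above, namely Lemma \ref{lem:relprojinj} and Proposition \ref{prp:injcRok}, since almost all of the content is already packaged there. First I would invoke the one-to-one correspondence of Corollary 5.4 of \cite{MR2047851}: to the triplet $(M_0, x, M_1)$, with $M_0$ and $M_1$ relatively projective by hypothesis, is assigned a UCT-Kirchberg $G$-algebra $A$ with Rokhlin property (unique up to conjugacy) such that $\K_0(A) \cong M_0$ and $\K_1(A) \cong M_1$ as CCT $G$-modules and $[1_A] = x \in \K_0(A)^G$.

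The key observation is then that the relative projectivity hypothesis transports verbatim from $M_*$ to $\K_*(A)$, so that $\K_*(A)$ are relatively projective $G$-modules. This is exactly the hypothesis of Lemma \ref{lem:relprojinj}, which therefore applies directly and yields that $A$ is $\J_G$-injective.

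Finally, $A$ is a UCT-Kirchberg $G$-algebra with Rokhlin property, so Proposition \ref{prp:injcRok} (which, as remarked, holds for general compact groups) upgrades $\J_G$-injectivity to continuous Rokhlin property. Hence the algebra $A$ corresponding to $(M_0, x, M_1)$ has continuous Rokhlin property, which is the assertion.

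Since the relative projectivity of the modules is merely a hypothesis carried across the correspondence, there is no genuine obstacle in the deduction itself; the substantive work is entirely contained in the two cited results. The one point that deserves a verifying remark is that the CCT $G$-module structure assigned by Izumi's correspondence coincides with the $\K$-theoretic $G$-module structure on which Lemma \ref{lem:relprojinj} imposes relative projectivity, so that the hypothesis of that lemma is literally met; this is immediate from the construction of the correspondence, but I would state it explicitly to make the passage from $M_*$ to $\K_*(A)$ unambiguous.
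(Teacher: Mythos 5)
Your proposal is correct and follows exactly the paper's own argument: the paper's proof is the one-line deduction "it follows from Lemma \ref{lem:relprojinj} and Proposition \ref{prp:injcRok}," which is precisely the chain you spell out (Izumi's correspondence realizes the triplet by a Rokhlin action, relative projectivity of $\K_*(A)$ gives $\J_G$-injectivity via Lemma \ref{lem:relprojinj}, and Proposition \ref{prp:injcRok} upgrades this to continuous Rokhlin property). Your closing remark about matching the CCT $G$-module structure with the $\K$-theoretic one is a reasonable clarification but not a departure from the paper's route.
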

\begin{proof}
It follows from Lemma \ref{lem:relprojinj} and Proposition \ref{prp:injcRok}.
\end{proof}

\begin{cor}
Let $A$ be a UCT-Kirchberg $G$-algebra with Rokhlin property. If both $\K_0(A)$ and $\K_1(A)$ are either finitely generated groups or bounded $p$-groups, then $A$ has continuous Rokhlin property.
\end{cor}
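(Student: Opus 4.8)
The plan is to reduce the statement, via the two results just established, to a module-theoretic assertion about $\zahl[G]$-modules, and then to settle that assertion using Katsura's structure theorem together with the elementary structure theory of finitely generated and bounded abelian groups. First I would invoke Proposition~\ref{prp:injcRok}: since $A$ is a UCT-Kirchberg $G$-algebra with Rokhlin property, it has continuous Rokhlin property as soon as it is $\J_G$-injective, and by Lemma~\ref{lem:relprojinj} it suffices for this that the $\zahl[G]$-modules $\K_0(A)$ and $\K_1(A)$ be relatively projective. By Izumi's classification (Corollary~5.4 of \cite{MR2047851}) these modules are CCT, so Katsura's theorem \cite{RIMS} places each of them, say $M$, in a pure exact sequence
\[ 0 \to P_1 \to P_0 \to M \to 0 \]
of $\zahl[G]$-modules with $P_0$ and $P_1$ relatively projective. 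The whole task is then to split this sequence over $\zahl[G]$.

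Next I would bring in the hypothesis on $M$. If $M$ is finitely generated it is a finite direct sum of cyclic groups by the structure theorem, and if $M$ is a bounded $p$-group it is a direct sum of cyclic groups by Pr\"ufer's theorem; in either case the underlying abelian group of $M$ is pure-projective. Since the displayed sequence is pure, it therefore splits as a sequence of abelian groups, so the surjection $P_0 \to M$ admits a $\zahl$-linear section and the inclusion $P_1 \to P_0$ is a $\zahl$-split monomorphism.

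Finally I would promote this $\zahl$-splitting to a $\zahl[G]$-splitting. Here I use that $G$ is finite, so that the induction and coinduction functors from abelian groups to $\zahl[G]$-modules agree; consequently the relatively projective module $P_1$ is also relatively injective, meaning that every $\zahl$-split monomorphism of $\zahl[G]$-modules starting at $P_1$ admits a $\zahl[G]$-linear retraction. Applying this to $P_1 \to P_0$ produces a $\zahl[G]$-linear retraction, hence a $\zahl[G]$-splitting of $P_0 \to M$, and exhibits $M$ as a direct summand of the relatively projective module $P_0$. Thus $M$ is relatively projective. Carrying this out for both $\K_0(A)$ and $\K_1(A)$ and feeding the conclusion back through Lemma~\ref{lem:relprojinj} and Proposition~\ref{prp:injcRok} yields the claim.

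The main obstacle is exactly this last upgrade. Katsura's sequence is only pure, so a priori it need not split even as abelian groups; the role of the hypothesis on $M$ is precisely to guarantee pure-projectivity of its underlying abelian group, and hence a $\zahl$-linear section, while the finiteness of $G$ is what allows that section to be made $G$-equivariant, through the coincidence of relative projectivity and relative injectivity of $P_1$. For a CCT module whose underlying group is not a direct sum of cyclic groups this mechanism fails, consistently with the Rokhlin actions without continuous Rokhlin property exhibited above.
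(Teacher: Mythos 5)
Your argument is correct, and the overall reduction is the same as the paper's, but you establish the crucial module-theoretic fact by a genuinely different route. The paper's own proof is essentially a citation: it invokes the preceding corollary (i.e.\ the combination of Lemma~\ref{lem:relprojinj} and Proposition~\ref{prp:injcRok}, exactly as you do) and then quotes Lemmas 3.12 and 3.13 of \cite{MR2047851}, which assert directly that a CCT module which is finitely generated (resp.\ a bounded $p$-group) is relatively projective. You instead re-derive this implication from Katsura's structure theorem \cite{RIMS} --- which the paper uses only in the opposite direction, to produce Rokhlin actions \emph{without} continuous Rokhlin property --- by noting that under the stated hypotheses the underlying abelian group of $M$ is a direct sum of cyclic groups (structure theorem, resp.\ Pr\"ufer), hence pure-projective over $\zahl$, so that Katsura's pure exact sequence $0 \to P_1 \to P_0 \to M \to 0$ splits $\zahl$-linearly, and then upgrading the $\zahl$-splitting to a $\zahl[G]$-splitting using that relatively projective $=$ relatively injective for finite $G$. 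All the auxiliary facts you use are standard (pure-projective abelian groups are exactly direct sums of cyclics; induced and coinduced modules agree for finite groups; summands of relatively projectives are relatively projective), so the argument is sound; moreover it proves something slightly stronger than the corollary, namely that the conclusion holds whenever $\K_0(A)$ and $\K_1(A)$ are direct sums of cyclic groups, a class strictly larger than the union of finitely generated groups and bounded $p$-groups. Two caveats: your splitting step needs purity \emph{over $\zahl$}, so you should pin down the notion of purity in Katsura's theorem --- if it is Cohn purity over $\zahl[G]$ the argument still goes through, since restriction of scalars along $\zahl \to \zahl[G]$ preserves purity, and if it is purity of abelian groups you use it verbatim; and your proof leans on Katsura's unpublished result where the paper's citation of Izumi's published lemmas does not.
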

\begin{proof}
It follows from the above corollary and Lemma 3.12 and Lemma 3.13 of \cite{MR2047851}.
\end{proof}

\subsection*{Acknowledgement}
The authors would like to thank their supervisor Yasuyuki Kawahigashi for his support and encouragement. They also would like to thank Takeshi Katsura and Yuhei Suzuki for helpful comments. 
Serious errors in the previous version of this paper is pointed out by Eusebio Gardella. The authors would like to thank him for his helpful comments. 
The first author wishes to thank the KU Leuven, where the last part of this paper was written, for the invitation and hospitality. This work was supported by Research Fellow of the JSPS (No.\ 26-2598 and No.\ 26-7081) and the Program for Leading Graduate Schools, MEXT, Japan.

\bibliographystyle{alpha}
\bibliography{bibABC,bibDEFG,bibHIJK,bibLMN,bibOPQR,bibSTUV,bibWXYZ,arXiv,AtiyahSegal}
\end{document}